\documentclass{amsart}

\usepackage{amsmath,amssymb,amsthm,mathtools}
\usepackage{amsrefs}
\usepackage{enumitem}
\usepackage{microtype}
\usepackage[dvipsnames]{xcolor}
\usepackage[colorlinks=true,linkcolor=blue!55!black,citecolor=green!45!black,urlcolor=red!60!black]{hyperref}
\numberwithin{equation}{section}

\usepackage{fourier}
\usepackage[margin=1.5in]{geometry}

\newtheorem{theorem}{Theorem}[section]
\newtheorem{proposition}[theorem]{Proposition}
\newtheorem{lemma}[theorem]{Lemma}
\newtheorem{corollary}[theorem]{Corollary}
\newtheorem{assumption}[theorem]{Assumption}
\theoremstyle{definition}
\newtheorem{definition}[theorem]{Definition}

\usepackage[capitalise,noabbrev]{cleveref}
\crefname{assumption}{Assumption}{Assumptions}
\Crefname{assumption}{Assumption}{Assumptions}

\DeclareMathOperator{\tr}{tr}

\DeclareMathOperator{\divop}{div}

\DeclareMathOperator{\Ent}{Ent}
\DeclareMathOperator{\Id}{Id}
\DeclareMathOperator{\supp}{supp}

\newcommand{\ud}{\,\mathrm{d}}

\newcommand{\RR}{\mathbb{R}}

\newcommand{\mc}[1]{\mathcal{#1}}

\newcommand{\eps}{\epsilon}

\newcommand{\abs}[1]{\lvert#1\rvert}

\newcommand{\norm}[1]{\lVert#1\rVert}

\newcommand{\e}{\mathrm{e}}

\newcommand{\Wtwo}{W_2}

\title{A sharp hypocoercive entropy decay estimate for underdamped Langevin dynamics}
\author{Jianfeng Lu}
\address{Mathematics Department, Duke University, Durham, NC 27708}
\email{jianfeng@math.duke.edu}
\date{}

\begin{document}

\begin{abstract}
We study the underdamped Langevin dynamics with invariant measure $\mu(\,\mathrm{d}x\,\mathrm{d}v)\propto \mathrm{e}^{-U(x)-\lvert v\rvert^2/2}\,\mathrm{d}x\,\mathrm{d}v$.  Assume that the position marginal $\mu_x(\,\mathrm{d}x)\propto \mathrm{e}^{-U(x)}\,\mathrm{d}x$ satisfies a logarithmic Sobolev inequality with constant $\rho>0$, and that $U$ is convex on $\mathbb{R}^d$ and satisfies some growth conditions. We introduce a modified entropy approach with a Wasserstein entropy-current corrector
\begin{equation*}
        \mathcal H_\epsilon(g)=\operatorname{Ent}_\mu(g)
        +\epsilon\int \Pi_v(v\,g)\cdot\bigl(x-T_q(x)\bigr)\,\mu_x(\mathrm{d}x),
\end{equation*}
where $\Pi_v$ denotes averaging over the velocity variable against the standard Gaussian $\kappa(\mathrm{d}v)=(2\pi)^{-d/2}\mathrm{e}^{-\lvert v\rvert^2/2}\,\mathrm{d}v$, $q=\Pi_v g$ is the position marginal density of $g$, and $T_q$ is the Brenier optimal transport map from $q\mu_x$ to $\mu_x$.  For friction $\gamma=\Gamma\sqrt\rho$ with $\Gamma>0$, and for any initial law $p_0$ with finite relative entropy, if $p_t$ denotes the law of underdamped Langevin dynamics at time $t$, we establish the explicit entropy decay
\begin{equation*}
        \operatorname{Ent}(p_t\mid\mu) \leq \frac{1+\theta}{1-\theta}\,\mathrm{e}^{-\Lambda t}\,\operatorname{Ent}(p_0\mid\mu), \qquad t\ge0,
\end{equation*}
with rate
\begin{equation*}
        \Lambda=\frac{\theta}{2(1+\theta)}\sqrt\rho, \qquad
        \theta=\min\Bigl\{\tfrac{\Gamma}{12},\tfrac{1}{4\Gamma}\Bigr\}.
\end{equation*}
In particular, the entropy convergence rate has optimal $\sqrt\rho$ order.
\end{abstract}

\maketitle

\section{Introduction}

The underdamped, or kinetic, Langevin dynamics is a basic model for sampling
and nonequilibrium relaxation. Given a potential $U:\RR^d\to\RR$ and friction
$\gamma>0$, it is the stochastic differential equation
\begin{equation*}
\begin{aligned}
        \ud X_t &= V_t\,\ud t,\\
        \ud V_t &= -\nabla U(X_t)\,\ud t-\gamma V_t\,\ud t
                  +\sqrt{2\gamma}\,\ud W_t,
\end{aligned}
\end{equation*}
where $W_t$ is a standard $\RR^d$-valued Brownian motion. Its invariant
measure is the product Gibbs measure
\[
        \mu(\ud x\ud v)\propto \exp\{-U(x)-\abs{v}^2/2\}\ud x\ud v.
\]
For a reference probability measure $\nu$, write the entropy
$\Ent_\nu(f):=\int f\log f\ud\nu$ for probability densities $f$ (with
$0\log0=0$), and write the relative entropy $\Ent(\eta\mid\nu):=\Ent_\nu(\ud\eta/\ud\nu)$ for
probability laws $\eta\ll\nu$, with value $+\infty$ otherwise.
Let $p_t$ be the law at time $t$ of the underdamped Langevin dynamics with
initial datum $p_0$.  The Langevin dynamics is a prototypical example of
hypocoercivity: for smooth solutions, the entropy dissipation is
\[
        \frac{\ud}{\ud t}\Ent(p_t\mid\mu)
        =-\gamma I_v(p_t/\mu)
        :=-\gamma\int\frac{\abs{\nabla_v(p_t/\mu)}^2}{p_t/\mu}\ud\mu,
\]
so the dissipation controls only velocity derivatives.  Spatial relaxation is
therefore produced indirectly, through the Hamiltonian transport
$v\cdot\nabla_x-\nabla U\cdot\nabla_v$.  Hypocoercivity refers to methods that
quantify this transfer of dissipation from velocity to position \cite{VillaniHypocoercivity}.

In this work we obtain quantitative entropy convergence estimates for the
underdamped Langevin dynamics.  If $\mu_x\propto \e^{-U}\ud x$ satisfies a
logarithmic Sobolev inequality with constant $\rho$, the natural benchmark is
the quadratic potential $U(x)=\rho\abs{x}^2/2$.  With damping parameter
$\gamma=\Gamma\sqrt\rho$, one may explicitly calculate that the best possible
order of relaxation is $\sqrt\rho$.  In comparison, the overdamped Langevin
dynamics relaxes with rate $\rho$, and thus the underdamped dynamics exhibits
diffusive-to-ballistic acceleration.  The aim of the present work is to obtain
such accelerated rates of convergence in relative entropy for general convex potentials, with constants depending only on $\Gamma$ and $\rho$.

Our main result (Theorem~\ref{thm:main}) is the explicit entropy decay
\[
        \Ent(p_t\mid\mu)\le\frac{1+\theta}{1-\theta}\,\e^{-\lambda_\Gamma\sqrt\rho\,t}\,\Ent(p_0\mid\mu),\qquad t\ge0,
\]
with the explicit constants
\[
        \lambda_\Gamma=\frac{\theta}{2(1+\theta)},\qquad
        \theta=\min\Bigl\{\tfrac{\Gamma}{12},\tfrac{1}{4\Gamma}\Bigr\},
\]
valid for arbitrary finite-entropy initial data $p_0$ and any $\Gamma>0$.  In particular, the rate of convergence is of sharp order $\sqrt\rho$.

The main new ingredient is a nonlinear corrector that couples the velocity current to the Brenier displacement of the position marginal (see $\mc{C}_{\rm OT}$ defined in \eqref{eq:C-H-def}), which is the time derivative of $\tfrac{1}{2} W_2^2(q \mu_x, \mu_x)$. This is motivated by the modified $L^2$ approach \cites{DMS,FanLiLu} and displacement convexity \cites{VillaniTopics,McCann}. The corrector replaces the usual local mixed derivative with a Wasserstein-gradient quantity adapted to entropy. Its derivative supplies the missing spatial coercivity, and as a result, the constants in the final entropy estimate only involve the LSI constant $\rho$.

\smallskip

The qualitative mechanism behind such estimates goes back to the work of
Desvillettes--Villani on entropy-dissipating kinetic equations
\cite{DesvillettesVillani} and to the spectral and hypoelliptic analysis of
kinetic Fokker--Planck operators by H\'erau--Nier and H\'erau
\cite{HerauNier}, \cite{Herau2007}.
Villani's hypocoercivity memoir \cite{VillaniHypocoercivity} then gave a systematic functional framework:
one modifies the natural norm, entropy, or Fisher information by adding mixed
position--velocity terms, and proves a closed Lyapunov inequality for the
modified functional.  These arguments are robust
and apply to many degenerate diffusions, but the constants are typically tied
to auxiliary Sobolev norms, derivative bounds on the coefficients, or abstract
coercivity constants.  As a result, extracting a sharp dependence on the
spatial Poincar\'e or log-Sobolev constant requires additional work.

A large literature has further developed this modified-functional idea in
Hilbert spaces to obtain more quantitative estimates.  Dolbeault, Mouhot, and Schmeiser introduced an abstract
micro--macro $L^2$ hypocoercivity method with computable rates for linear
kinetic equations conserving mass \cites{DMS09, DMS} and the approach has been adapted to underdamped Langevin dynamics \cites{RousselStoltz, FanLiLu}.  In particular, a
very recent gap-shifted modified $L^2$ method \cite{FanLiLu} provides a simple proof of a sharp $O(\sqrt m)$ convergence rate estimate, where $m$ is the Poincar\'e constant of the spatial measure. Related Hilbert-space and
Bakry--\'Emery-type approaches clarify domain issues and produce explicit
hypocoercive estimates for Kolmogorov and Langevin generators
\cites{GrothausStilgenbauer, Baudoin}.  More recently, the space-time Poincar\'e inequality approach of Albritton, Armstrong, Mourrat, and Novack
\cite{AlbrittonArmstrongMourratNovack} and the Langevin refinement \cite{CaoLuWang} have led to explicit $L^2$ rates based on time-augmented functional inequalities.  In particular, Cao, Lu, and Wang obtained the sharp $O(\sqrt m)$ rate when the spatial measure has Poincar\'e constant $m$ and the potential is convex \cite{CaoLuWang}. There are also resolvent and space-time Poincar\'e--Lions approaches that are well suited to explicit constants and to weakly confining regimes
\cite{BernardFathiLevittStoltz}, \cite{BrigatiStoltz}.  These results are closest in spirit to the rate question addressed here, but they are primarily $L^2$ or
resolvent estimates and use linear correctors or Hilbert projections rather
than a nonlinear entropy functional adapted to the evolving density.

Entropy hypocoercivity is more subtle because the dissipation is nonlinear in the
solution.  Villani's entropic method and the sharp entropy analysis of
Arnold--Erb for linear-drift Fokker--Planck equations show how modified
entropy or modified entropy-production functionals can recover exponential
convergence, with sharp rates in the Ornstein--Uhlenbeck case
\cite{VillaniHypocoercivity}, \cite{ArnoldErb}.  For nonlinear confinement, the
entropic multipliers method of Cattiaux, Guillin, Monmarch\'e, and Zhang removes
some bounded-Hessian assumptions by replacing the standard log-Sobolev input
with a weighted log-Sobolev inequality \cite{CattiauxGuillinMonmarcheZhang}.
These approaches differ from the present one in two important ways: their
correctors are local differential multipliers, and the closing functional
inequalities generally encode more information than the sole LSI constant
$\rho$ of the position marginal.  The estimate proved below instead uses, at the level of constants, only
convexity and the ordinary LSI for $\mu_x$; the additional tame-growth
assumption is used solely for the approximation argument.

Probabilistic and optimal transport methods give another family of quantitative kinetic estimates.  Coupling constructions yield contractions for modified Wasserstein distances and can be highly explicit, especially near the boundary between overdamped and underdamped regimes \cites{EberleGuillinZimmer, DalalyanRiou-Durand}. On compact position spaces, phase-space Wasserstein contraction estimates can also be proved by exploiting the explicit Langevin flow \cite{DietertEvansHolding}. These results quantify convergence in transport metrics on phase space, but they do not directly yield the entropy estimate pursued here and often do not yield sharp estimates.

\section{Main results and outline}

Let
\[
        \mu(\ud x\ud v)=\mu_x(\ud x)\kappa(\ud v),\qquad
        \mu_x(\ud x)=r(x)\ud x=Z_x^{-1}\e^{-U(x)}\ud x,
\]
where
\[
        \kappa(\ud v)=(2\pi)^{-d/2}\e^{-\abs{v}^2/2}\ud v.
\]
If $p_t(x,v)$ denotes the Lebesgue density of the law of the underdamped
Langevin diffusion, then $p_t$ evolves according to the Fokker--Planck equation
\begin{equation}\label{eq:FP}
        \partial_t p_t+v\cdot\nabla_x p_t-\nabla U(x)\cdot\nabla_v p_t
        =\gamma\nabla_v\cdot\bigl(vp_t+\nabla_v p_t\bigr).
\end{equation}
Let $\varrho_\infty(x,v)=r(x)(2\pi)^{-d/2}\e^{-\abs{v}^2/2}$ denote the Lebesgue
density of $\mu$.  It is more convenient to work with the density ratio
$g_t=p_t/\varrho_\infty$, which satisfies
\begin{equation}\label{eq:forward}
        \partial_t g=-\mathcal L_a g+\gamma\mathcal L_s g,
        \qquad
        \mathcal L_a=v\cdot\nabla_x-\nabla U\cdot\nabla_v,
        \qquad
        \mathcal L_s=\Delta_v-v\cdot\nabla_v.
\end{equation}
The operator $\mathcal L_a$ is skew-adjoint in $L^2(\mu)$, and $\mathcal L_s$ is the symmetric Ornstein--Uhlenbeck generator in $v$.

Adding a constant to $U$ changes only the normalizing constant $Z_x$, so without loss of generality we assume $U\ge0$. In addition, we make the following standing assumptions on $U$.

\begin{assumption}[Convex potential with logarithmic Sobolev inequality]\label{ass:lsi}
The potential $U\in C^\infty(\RR^d)$ is convex on $\RR^d$, and the probability
measure $\mu_x=Z_x^{-1}\e^{-U}\ud x$ satisfies the logarithmic Sobolev
inequality with constant $\rho>0$:
\begin{equation}\label{eq:lsi}
        \Ent_{\mu_x}(f)
        \le \frac{1}{2\rho}\int \frac{\abs{\nabla f}^2}{f}\ud\mu_x,
        \qquad \forall\, f\ge0,
        \qquad \int f\ud\mu_x=1.
\end{equation}
\end{assumption}

We also impose the following quantitative regularity bounds on $U$, following
H\'erau--Nier \cite{HerauNier}; these are used only to invoke the kinetic
Fokker--Planck semigroup theory in \Cref{sec:remove-regularity}.  The bounds
\eqref{eq:U-derivative-growth}--\eqref{eq:U-confining-growth} below coincide
with Hypothesis~1 of \cite{HerauNier} after a physical-parameter rescaling.

\begin{assumption}[Tame confining potential]\label{ass:main}
The potential $U$ satisfies the following quantitative confining bounds for some $n \geq 1$. Write $\langle x\rangle=(1+\abs{x}^2)^{1/2}$.
\begin{enumerate}[label=\textup{(A\arabic*)},leftmargin=2.4em]
\item \label{ass:hn-upper} There are constants $C_\alpha<\infty$, $\alpha\in\mathbb N^d$, such that
\begin{equation}\label{eq:U-derivative-growth}
        \abs{\partial_x^\alpha U(x)}
        \le C_\alpha\left(1+\langle x\rangle^{2n-\min\{\abs{\alpha},2\}}\right),
        \qquad x\in\RR^d.
\end{equation}
In particular, $U$ has at most polynomial growth of order $2n$, $\nabla U$ has
at most order $2n-1$, and all derivatives of order at least two have at most
order $2n-2$.

\item \label{ass:hn-lower} There are constants $0<C_0,C_1<\infty$ such that
\begin{equation}\label{eq:U-confining-growth}
        U(x)\ge C_0^{-1}\langle x\rangle^{2n}-C_0,
        \qquad
        \abs{\nabla U(x)}\ge C_1^{-1}\langle x\rangle^{2n-1}-C_1,
        \qquad x\in\RR^d.
\end{equation}
Consequently $Z_x=\int \e^{-U(x)}\ud x<\infty$.
\end{enumerate}
\end{assumption}

Our main result is the following sharp entropy hypocoercivity decay estimate.  The remainder of this section sketches the proof idea and sets up the notation and the regularity class used in its proof.

\begin{theorem}[Entropy hypocoercivity]\label{thm:main}
Assume Assumptions~\ref{ass:lsi} and~\ref{ass:main}.  Let $\gamma=\Gamma\sqrt\rho$ with $\Gamma>0$ and choose
\begin{equation}\label{eq:theta}
        0<\theta\le \min\left\{\frac{\Gamma}{12},\ \frac{1}{4\Gamma}\right\}.
\end{equation}
Set
\begin{equation}\label{eq:lambda}
        \lambda_\Gamma=\frac{\theta}{2(1+\theta)}.
\end{equation}
Then every finite-entropy initial density $g_0\ge0$, $\int g_0\ud\mu=1$, satisfies
\begin{equation}\label{eq:main-decay}
        \Ent_\mu(\mc{P}_tg_0)
        \le \frac{1+\theta}{1-\theta}
        \exp\{-\lambda_\Gamma\sqrt\rho\,t\}\Ent_\mu(g_0),
        \qquad t\ge0,
\end{equation}
where $\mc{P}_t$ is the semigroup generated by \eqref{eq:forward}.
\end{theorem}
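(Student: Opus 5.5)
We will prove \cref{thm:main} with a Lyapunov argument for the modified entropy
\[
        \mc H_\eps(g)=\Ent_\mu(g)+\eps\,\mc C_{\rm OT}(g),\qquad
        \mc C_{\rm OT}(g)=\int \Pi_v(vg)\cdot\bigl(x-T_q(x)\bigr)\,\mu_x(\ud x),
\]
with $\eps=\theta/\sqrt\rho$. First we work with smooth, positive, rapidly decaying solutions of \eqref{eq:forward}. Then we pass to general finite-entropy data by approximation, using the Hérau--Nier semigroup theory that \cref{ass:main} makes available.

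\emph{Equivalence of $\mc H_\eps$ and $\Ent_\mu$.} Write $j=\Pi_v(vg)$, so that $j\,r$ is the momentum current of $g\mu$. Cauchy--Schwarz in $L^2(q\mu_x)$ gives
\[
        \abs{\mc C_{\rm OT}(g)}\le \Bigl(\int \abs{j}^2/q\,\ud\mu_x\Bigr)^{1/2}\Wtwo(q\mu_x,\mu_x).
\]
Two standard facts control the right-hand side. The first is the Gaussian bound $\int\abs{j}^2/q\,\ud\mu_x\le 2\,\Ent_\mu(g)-2\,\Ent_{\mu_x}(q)$, obtained by Donsker--Varadhan in $v$ conditionally on $x$, or by convexity of $(j,q)\mapsto\abs{j}^2/q$. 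The second is Talagrand's inequality $\Wtwo^2(q\mu_x,\mu_x)\le (2/\rho)\Ent_{\mu_x}(q)$, which follows from the LSI \eqref{eq:lsi}. With $a+b=\Ent_\mu(g)$, the product $2\sqrt{ab}\le a+b$ then yields
\[
        \eps\abs{\mc C_{\rm OT}}\le \theta\,\Ent_\mu(g).
\]
Hence $(1-\theta)\Ent_\mu\le\mc H_\eps\le(1+\theta)\Ent_\mu$, which accounts for the prefactor $\frac{1+\theta}{1-\theta}$ in \eqref{eq:main-decay}.

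\emph{Derivative of the corrector.} Along the flow, $\partial_t q+\nabla_x^*j=0$ in $L^2(\mu_x)$. Since $x-T_q(x)=\nabla\phi_q$ is the Wasserstein gradient of $\frac12\Wtwo^2(\cdot,\mu_x)$, the time derivative splits into two parts. The first is $\int j\cdot\partial_t(x-T_q)\,\ud\mu_x$. The second is $\int \partial_t j\cdot(x-T_q)\,\ud\mu_x$, where the current satisfies
\[
        \partial_t j=-\gamma j-\nabla_x\cdot\Pi_v(v\otimes v\,g)-\nabla U\,q\quad\text{(in weighted form)}.
\]
Decompose $\Pi_v(v\otimes vg)=q\Id+\Pi_v((v\otimes v-\Id)g)$.

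The $q\Id$ part together with the $\nabla U$ term produces $-\int \nabla\log q\cdot(x-T_q)\,q\ud\mu_x$. By displacement convexity of $\Ent_{\mu_x}$ along the geodesic to $\mu_x$ (here convexity of $U$ is used; this is the ``above tangent'' inequality), this term is bounded by $-\Ent_{\mu_x}(q)$.

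The transport-of-current term $\int j\cdot\partial_t(x-T_q)$ is the second variation of $\frac12\Wtwo^2$ in the direction of the velocity field $j/q$. Convexity of $U$ should let us bound it by $\int\abs{j}^2/q\,\ud\mu_x$: since $T_q$ is the gradient of a convex function, $D(x-T_q)\le\Id$ formally. This is the \textbf{main obstacle}. $T_q$ depends nonlinearly on $q$, and justifying differentiability requires regularity of Brenier maps. We will first try the formal linearized Monge--Ampère computation, then justify it by using only the one-sided inequality in integrated form. Concretely, we will compare $\frac12\Wtwo^2$ at times $t$ and $t+h$ through the suboptimal coupling transported by the flow, which yields an upper Dini-derivative bound sufficient for a Grönwall argument.

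The remaining terms are handled as follows. The term $-\gamma\,\mc C_{\rm OT}$ is bounded by Cauchy--Schwarz. The anisotropic term, after integrating by parts against $D(x-T_q)\le\Id$, is controlled by the $v$-Fisher information, via the Gaussian Poincaré/LSI in $v$ applied to $g/q$.

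\emph{Closing the estimate.} Collecting terms gives
\[
        \frac{\ud}{\ud t}\mc H_\eps\le-\gamma I_v+\eps\Bigl(C\!\int\!\frac{\abs{j}^2}{q}\ud\mu_x+C' I_v+\gamma\abs{\mc C_{\rm OT}}-\Ent_{\mu_x}(q)\Bigr).
\]
The current energy satisfies $\int\abs{j}^2/q\,\ud\mu_x\le\int\abs{\nabla_v g}^2/g\,\ud\mu=I_v$ by Cauchy--Schwarz. The Gaussian LSI gives $\Ent_\mu(g)-\Ent_{\mu_x}(q)\le\frac12 I_v$. With $\gamma=\Gamma\sqrt\rho$ and $\eps=\theta/\sqrt\rho$, the constraints $\theta\le\Gamma/12$ and $\theta\le 1/(4\Gamma)$ are exactly what absorb the error terms into $-\gamma I_v$ and $-\eps\Ent_{\mu_x}(q)$. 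The result is
\[
        \frac{\ud}{\ud t}\mc H_\eps\le-\frac{\theta}{2}\sqrt\rho\,\Ent_\mu(g)\le-\frac{\theta\sqrt\rho}{2(1+\theta)}\,\mc H_\eps.
\]
Grönwall combined with the equivalence of $\mc H_\eps$ and $\Ent_\mu$ gives \eqref{eq:main-decay}.

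Finally, for general $g_0$ we truncate and mollify, and use hypoellipticity and the decay estimates of Hérau--Nier under \cref{ass:main} to obtain smooth, positive solutions with Gaussian-type tails, for which all the integrations by parts are legitimate. We then pass to the limit using lower semicontinuity of entropy and the $L^1$ stability of $\mc P_t$.
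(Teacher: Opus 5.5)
\textbf{Verdict: genuine gap.} Your skeleton matches the paper: the corrector $\mc C_{\rm OT}$, equivalence via $J\le 2\Ent_v$ and Talagrand, a suboptimal-coupling bound on the second variation of $\tfrac12\Wtwo^2$, Young-type closure, and approximation via H\'erau--Nier. But the step that carries the argument fails as stated.

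\textbf{The stress term.} You bound $-\int\nabla q\cdot\xi_q\,\ud\mu_x\le-\Ent_{\mu_x}(q)$ by the above-tangent inequality. You then claim that the stress term $\int\xi_q\cdot\nabla_x^*\Theta\,\ud\mu_x$ is controlled by $I_v$ after integrating by parts against $D\xi_q\le\Id$. This cannot work. $D\xi_q=\Id-D^2\varphi_q$ is bounded above but not below, and $\Theta$ is indefinite (only $\Theta\ge-q\Id$). So directions where the conditional velocity covariance is below the identity pair with the unbounded Hessian of the Brenier potential.

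Here is a concrete counterexample. Take $d=1$, $U=x^2/2$, $q\mu_x=N(0,\delta^2)$, and $h_x$ the $\kappa$-density of $N(0,1-\eta)$ for every $x$. Then $T_q(x)=x/\delta$, $j=0$, $\Theta=-\eta q$, and
\[
        \int\xi_q\,\nabla_x^*\Theta\,\ud\mu_x=\eta\Bigl(\frac1\delta-1\Bigr)\longrightarrow\infty\qquad(\delta\to0).
\]
Meanwhile $I_v(g)=\eta^2/(1-\eta)$ stays fixed and $\Ent(g)$ grows only like $\log(1/\delta)$. So no bound by $C\,I_v$, or even $C\,\Ent$, is possible.

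The estimate survives only because $\int\nabla q\cdot\xi_q\,\ud\mu_x=(1/\delta-1)(1-\delta^2)$ is far larger than $\Ent_{\mu_x}(q)$. The defect you discard in the above-tangent inequality is exactly what must absorb the stress. This is Lemma~\ref{lem:stress}, the missing idea. Write $D^2\varphi_q=G\,\ud x+N\sigma$. BV integration by parts and the Alexandrov Monge--Amp\`ere identity $\log q=U-U\circ T_q+\log\det G$ give
\[
        \int\nabla q\cdot\xi_q\,\ud\mu_x=\Ent_{\mu_x}(q)+\int q(\tr G-d-\log\det G)\,\ud\mu_x+\int rq\tr N\,\ud\sigma+\int q\bigl[\nabla U\cdot\xi_q-U+U\circ T_q\bigr]\ud\mu_x,
\]
where the last (Bregman) term is nonnegative by convexity of $U$. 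They also give
\[
        \int\xi_q\cdot\nabla_x^*\Theta\,\ud\mu_x=\int\Theta:(\Id-G)\,\ud\mu_x-\int r\Theta:N\,\ud\sigma.
\]
The fiberwise Gaussian duality of Lemma~\ref{lem:cov-dual} (Donsker--Varadhan with test function $\beta(z\otimes z-\Id):(\Id-G)$, $\beta<1/2$) gives
\[
        (\Sigma-\Id):(\Id-G)\le\beta^{-1}\Ent_\kappa(h_x)+\tr G-d-\log\det G .
\]
For the singular part, $-\Theta:N\le q\tr N$. The defect terms then cancel exactly, leaving the bound $\beta^{-1}\Ent_v(g)-\Ent_{\mu_x}(q)$.

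\textbf{The current term.} The same misallocation affects this term. $\int j\cdot\partial_t\xi_q\,\ud\mu_x$ alone is not bounded by $J$. For $\mu_x=N(0,1)$ and $\nu_t=N(b_t,s_t^2)$ it equals $J/s_t$, which exceeds $J$ when $s_t<1$.

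What your suboptimal-coupling argument actually yields is $D''\le J+\int\xi_t\cdot a_t\,\ud\nu_t$, with $q_ta_t=\partial_tj_t-\nabla_x^*(j_t\otimes j_t/q_t)$. So the convective part $j\otimes j/q$ of $\Pi_v((v\otimes v-\Id)g)$ is consumed there, and only $\Theta$ is left as stress. Consequently, the real obstacle is not differentiability of $T_q$, which the coupling bypasses, but the stress estimate above.

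\textbf{Two slips.} First, $\eps$ must be $\theta\sqrt\rho$, not $\theta/\sqrt\rho$, since $\abs{\mc C_{\rm OT}}\le\rho^{-1/2}\Ent$. With your choice, neither the $(1\pm\theta)$ equivalence nor the $\sqrt\rho$ rate comes out. Second, the marginal equation is $\partial_tq=\nabla_x^*j$, not $\partial_tq+\nabla_x^*j=0$.
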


The proof uses a modified entropy functional in the spirit of the modified $L^2$ approach of \cite{DMS}.  We first introduce notation
for decomposing a density $g$ into its marginal and conditional parts.  Let
$\Pi_v$ denote averaging over the velocity variable against $\kappa$, that is,
\begin{equation}\label{eq:Piv-def}
        (\Pi_v\phi)(x)=\int\phi(x,v)\,\kappa(\ud v)
\end{equation}
for any scalar-, vector-, or tensor-valued function $\phi$ for which the integral makes sense.  For a probability density $g$ with respect to $\mu$, set its spatial marginal
\begin{equation}\label{eq:qj}
        q=\Pi_v g.
\end{equation}
On the set $\{x \mid q(x)>0\}$ write the conditional density
\begin{equation}\label{eq:hx}
        h_x(v)=\frac{g(x,v)}{q(x)}.
\end{equation}
If $\Pi_v(\abs{v}\,g)(x)<\infty$ for $\mu_x$-a.e.\ $x$, define
\begin{equation}\label{eq:j-m-def}
        j=\Pi_v(v\,g),
        \qquad
        m(x)=\frac{j(x)}{q(x)}.
\end{equation}
On the set $\{q=0\}$ choose $h_x$ to be $1$ and set $j=0$ and $m=0$.
If the first moment $\Pi_v(\abs{v}\,g)(x)$ fails to be finite on a set of positive
$q\mu_x$-measure, all quantities involving $j$ or $m$ are assigned the value
$+\infty$ unless a separate regularity assumption supplies these fields.  In
particular, finite conditional entropy implies existence of the first moment by Lemma~\ref{lem:current} below.

Similarly, when
$\Pi_v(\abs{v}^2g)(x)<\infty$ for $\mu_x$-a.e.\ $x$, define the second-moment
field and the (centered) stress tensor
\begin{equation}\label{eq:M-Theta-def}
        M=\Pi_v(v\otimes v\,g),
        \qquad
        \Theta=M-\frac{j\otimes j}{q}-q\,I_d,
\end{equation}
with $j\otimes j/q$ interpreted as $0$ on $\{q=0\}$.  By construction,
$I_d+\Theta/q$ coincides with the conditional covariance of $v$ given $x$ on
$\{q>0\}$; in particular $\Theta$ is symmetric and $I_d+\Theta/q\ge0$ a.e.

\smallskip

With the convention above, the relative entropy admits the decomposition
\begin{equation}\label{eq:E-split}
        \Ent(g):=\Ent_\mu(g)=\Ent_x(q)+\Ent_v(g),
\end{equation}
where
\begin{equation}\label{eq:Ex-Ev}
        \Ent_x(q):=\Ent_{\mu_x}(q),
        \qquad
        \Ent_v(g):=\int q(x)\Ent_\kappa(h_x)\,\mu_x(\ud x).
\end{equation}
The identity \eqref{eq:E-split} is understood in the extended sense; if
$\Ent(g)<\infty$, both terms on the right are finite.

Whenever $\Ent_x(q)<\infty$, Talagrand's inequality following from
the log-Sobolev inequality \eqref{eq:lsi} gives $\Wtwo(q\mu_x,\mu_x)<\infty$; see \cite{OttoVillani} and
\cite{GigliLedoux}.  Since $q\mu_x\ll\ud x$, Brenier's theorem gives a
$q\mu_x$-a.e.\ unique optimal map $T_q=\nabla\varphi_q$ transporting $q\mu_x$
to $\mu_x$; see \cite{Brenier} and \cite{VillaniTopics}.  Denote
\begin{equation}\label{eq:xi}
        \xi_q(x)=x-T_q(x).
\end{equation}
We also use the following spatial notation throughout the paper.  For vector
fields $F$, let
\begin{equation}\label{eq:nabla-star}
        \nabla_x^*F=-\divop_xF+\nabla U\cdot F
\end{equation}
be the adjoint of $\nabla_x$ in $L^2(\mu_x)$.  For matrix fields $M$,
$\nabla_x^*M$ is understood row by row:
\[
        (\nabla_x^*M)_i=-\sum_k\partial_{x_k}M_{ik}
        +\sum_k(\partial_{x_k}U)M_{ik}.
\]
For some $\eps > 0$ to be specified later, we define the Wasserstein current corrector and modified entropy as
\begin{equation}\label{eq:C-H-def}
        \mathcal C_{\rm OT}(g)=\int j(x)\cdot \xi_q(x)\,\mu_x(\ud x),
        \qquad
        \mathcal H_\eps(g)=\Ent(g)+\eps\mathcal C_{\rm OT}(g),
\end{equation}
whenever the integral is finite.  Lemmas~\ref{lem:current} and~\ref{lem:C-small}
below show that this is automatic for finite entropy.

The intuition behind this choice of corrector is that $\xi_q=x-T_q$ is the Wasserstein gradient, with respect to the first argument $q\mu_x$, of $\frac12\Wtwo^2(q\mu_x,\mu_x)$.
Formally, along a smooth solution, if $\nu_t=q_t\mu_x$ and $m_t=j_t/q_t$, then the marginal equation is the continuity equation
\[
        \partial_t\nu_t+\divop(m_t\nu_t)=0,
\]
and the first-variation formula for $D(t)=\frac12\Wtwo^2(\nu_t,\mu_x)$ gives
(see \Cref{sec:acceleration})
\[
        D'(t)=\int \xi_{q_t}\cdot m_t \ud\nu_t
        =\int \xi_{q_t} \cdot j_t \ud\mu_x
        =\mathcal C_{\rm OT}(g_t).
\]
Thus the corrector pairs the position error, measured by the optimal
transport displacement back to $\mu_x$, with the velocity current that moves the
position marginal.  It is the Wasserstein analogue of the position--velocity
cross term in classical hypocoercivity: its derivative produces the missing
coercive contribution in $\Ent_x(q_t)$ due to displacement convexity of the relative entropy \cites{VillaniTopics, McCann}, while its size remains controlled by the entropy through the current bound and Talagrand's inequality, as stated in Proposition~\ref{prop:corrector}.

We will establish exponential decay of the modified entropy $\mathcal H_\eps$ and
its equivalence with the relative entropy $\Ent$ for $\eps$ small enough.  The
key estimate is stated first for regular finite-entropy solutions.  This
regularity class is used to separate the presentation of key ideas and calculations from technical details; the precise definition is placed after the proof sketch, and the result for arbitrary finite-entropy
data is obtained by approximation in \Cref{sec:remove-regularity}.

\begin{proposition}\label{prop:corrector}
Assume Assumptions~\ref{ass:lsi} and~\ref{ass:main}, let $\gamma$ and
$\theta$ be as in Theorem~\ref{thm:main}, and set $\eps=\theta\sqrt\rho$.
For every regular
finite-entropy solution, the corrector $\mathcal C_{\rm OT}(g_t)$ is
well-defined and
\begin{equation}\label{eq:H-equivalence}
        (1-\theta)\Ent(g_t)
        \le \mathcal H_\eps(g_t)
        \le (1+\theta)\Ent(g_t),
\end{equation}
and $\mathcal H_\eps$ satisfies
\begin{equation}\label{eq:H-diff}
        \frac{\ud}{\ud t}\mathcal H_\eps(g_t)
        \le -\lambda_\Gamma\sqrt\rho\,\mathcal H_\eps(g_t)
\end{equation}
in the sense of distributions on the time interval.
\end{proposition}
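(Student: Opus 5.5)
Two estimates are needed: the equivalence \eqref{eq:H-equivalence} and the differential inequality \eqref{eq:H-diff}.

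\emph{Equivalence.} Write $\mathcal C_{\rm OT}(g)=\int m\cdot\xi_q\,q\ud\mu_x$ and apply Cauchy--Schwarz:
\[
\abs{\mathcal C_{\rm OT}(g)}\le\Bigl(\int\abs{m}^2q\ud\mu_x\Bigr)^{1/2}\Wtwo(q\mu_x,\mu_x).
\]
For the first factor, use the Gaussian transport-entropy inequality for each conditional law $h_x\kappa$. It gives $\abs{m(x)}^2\le 2\Ent_\kappa(h_x)$, hence $\int\abs{m}^2q\ud\mu_x\le 2\Ent_v(g)$; this is the content of Lemma~\ref{lem:current}. For the second factor, Talagrand's inequality from \eqref{eq:lsi} gives $\Wtwo^2\le 2\Ent_x(q)/\rho$. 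Combining with $ab\le\frac12(a^2+b^2)$,
\[
\abs{\mathcal C_{\rm OT}}\le \frac{2}{\sqrt\rho}\sqrt{\Ent_v\Ent_x}\le\frac{1}{\sqrt\rho}\Ent(g),
\]
so $\eps=\theta\sqrt\rho$ yields \eqref{eq:H-equivalence}. This is Lemma~\ref{lem:C-small}.

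\emph{Differential inequality, step 1 (entropy part).} Compute $\frac{\ud}{\ud t}\mathcal H_\eps=-\gamma I_v(g)+\eps\frac{\ud}{\ud t}\mathcal C_{\rm OT}$. To differentiate the corrector, let $\nu_t=q_t\mu_x$, so that $\mathcal C_{\rm OT}=D'(t)$ with $D=\frac12\Wtwo^2$. Rather than differentiate $D'$ directly, I would use the moment equation for $j$: integrating \eqref{eq:forward} against $v$ gives
\[
\partial_tj=-\nabla_x^*(M)-q\nabla U\cdot(\text{correction})-\gamma j,
\]
or more precisely $\partial_t j=-\nabla_x^*\Theta-\nabla_x q-\nabla_x\cdot(j\otimes j/q)$-type terms, minus $\gamma j$.

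\emph{Step 2 (corrector derivative).} Then
\[
\frac{\ud}{\ud t}\mathcal C_{\rm OT}=\int\partial_tj\cdot\xi_q\ud\mu_x+\int j\cdot\partial_t\xi_q\ud\mu_x .
\]
The four contributions are handled as follows.
\begin{itemize}
\item[(i)] The pressure term gives $-\int\nabla_xq\cdot\xi_q\ud\mu_x$. By displacement convexity of $\Ent_x$ (convex $U$), this is at most $-\Ent_x(q)$, via the above-tangent inequality $\Ent_x(q)\le\int\nabla\log q\cdot\xi_q\,q\ud\mu_x$. This is the coercive term.
\item[(ii)] The stress term $\int\Theta:\nabla\xi_q\ud\mu_x$ uses $\nabla\xi_q=I-\nabla^2\varphi_q$. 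Here $\nabla^2\varphi_q\ge0$ (in the Alexandrov/distributional sense), and convexity bounds these terms. The remaining parts are controlled by $\sqrt{I_v}\cdot(\cdot)$, since $\Theta$ and the conditional covariance deviations are bounded by velocity Fisher information via Gaussian Poincaré/LSI.
\item[(iii)] The friction term gives $-\gamma\mathcal C_{\rm OT}$, of size at most $\gamma\sqrt{2\Ent_v}\,\Wtwo$.
\item[(iv)] The convective terms $j\otimes j/q$, together with $\int j\cdot\partial_t\xi_q$, should combine into the second variation of $D$ along the flow. Its sign is favorable up to $\int\abs{m}^2q\ud\mu_x\le 2\Ent_v\le I_v$, using the Gaussian LSI in $v$.
\end{itemize}

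\emph{Step 3 (closing).} Collect everything as
\[
\frac{\ud}{\ud t}\mathcal H_\eps\le-\gamma I_v-\eps\Ent_x+\eps\bigl(C_1 I_v+\gamma\sqrt{I_v}\,\Wtwo+\dots\bigr),
\]
with $\Wtwo\le\sqrt{2\Ent_x/\rho}$. Use Young's inequality and $\Ent_v\le\frac12I_v$. The conditions $\theta\le\Gamma/12$ and $\theta\le1/(4\Gamma)$ are exactly what absorb the $\eps I_v$ and $\eps\gamma\sqrt{I_v\Ent_x/\rho}$ cross terms. The result is $\frac{\ud}{\ud t}\mathcal H_\eps\le-\frac{\theta}{2}\sqrt\rho\,\Ent$, and \eqref{eq:H-equivalence} then yields the rate $\lambda_\Gamma$.

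\emph{Main obstacle.} The hardest part is Step 2: differentiating $t\mapsto\xi_{q_t}$, i.e.\ the term $\int j\cdot\partial_t\xi_q$. The Brenier map lacks time regularity. I would avoid differentiating it directly by working with $D(t)$ and its one-sided first- and second-variation inequalities, using the distributional formulation and the regular solution class. Bounding the stress/Hessian pairing, with only Alexandrov regularity of $\varphi_q$, is the other delicate point.
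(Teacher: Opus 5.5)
Your equivalence argument, the friction term, the handling of the convective part through one-sided second variations of $D(t)=\tfrac12\Wtwo^2(\nu_t,\mu_x)$, and the closing step all agree with the paper. There is, however, a genuine gap in step (ii), and it is caused by how you use step (i).

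Write $D^2\varphi_q=G\,\ud x+N\sigma$ (Alexandrov part plus singular part) and $\Theta=q(\Sigma-I_d)$. The stress pairing is then $\int q(\Sigma-I_d):(I_d-G)\,\ud\mu_x+\int rq(I_d-\Sigma):N\,\ud\sigma$. Where the conditional covariance has $\Sigma<I_d$, the pieces $q(I_d-\Sigma):G$ and $rq(I_d-\Sigma):N$ can be as large as $q\tr G$ and $rq\tr N$. The sign $G,N\ge0$ (your ``convexity'') gives exactly this upper bound; it does not make the terms small.

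The pairing is linear in $\Sigma-I_d$, so it cannot be bounded by $I_v$ alone. A bound of the form $\sqrt{I_v}\cdot(\cdot)$ would need some norm of $I_d-D^2\varphi_q$ inside $(\cdot)$, and neither $\Ent_x$ nor $\Wtwo$ controls such a norm. Avoiding the integration by parts does not help: it leaves $\nabla_x^*\Theta$, i.e.\ $x$-derivatives of velocity moments, which are equally uncontrolled. For instance, let every $h_x$ be a centered Gaussian with covariance $\tfrac12I_d$ and let $q\mu_x$ be concentrated at scale $\delta$. Then $I_v=d/2$ stays fixed and $\Ent_x$ is of order $d\log\delta^{-1}$, while the pairing contains $\tfrac12\int q\tr G\,\ud\mu_x$, which is of order $d\,\delta^{-1}$. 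So bounding (i) by $-\Ent_x(q)$ and (ii) separately cannot close.

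The missing idea is to keep the remainder in the above-tangent inequality rather than discard it. BV integration by parts for $\xi_q$, together with the Monge--Amp\`ere identity $\log q=U-U\circ T_q+\log\det G$, gives exactly (once the cutoff boundary term vanishes)
\[
\mathfrak A(q)-\Ent_x(q)=\int q\bigl(\tr G-d-\log\det G\bigr)\ud\mu_x+\int rq\tr N\ud\sigma+\int q\bigl[\nabla U\cdot\xi_q-U+U\circ T_q\bigr]\ud\mu_x ,
\]
with all three terms nonnegative. The first two terms pay for the stress:
\begin{enumerate}
\item Apply the Gaussian entropy duality to $\beta\bigl((z\otimes z-I_d):(I_d-G)\bigr)$ and use $1+2\beta(\lambda-1)\ge\lambda^{2\beta}$. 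This gives, pointwise, $(\Sigma-I_d):(I_d-G)\le\beta^{-1}\Ent_\kappa(h_x)+\tr G-d-\log\det G$.
\item The constraints $\Sigma\ge0$ and $N\ge0$ give $-\Theta:N\le q\tr N$.
\end{enumerate}
Together these yield $-\mathfrak A(q)+\mathfrak S(q,\Theta)\le\beta^{-1}\Ent_v(g)-\Ent_x(q)$, which is the paper's stress lemma. Taking $\beta=\tfrac14$ and using $J\le I_v$ and $4\Ent_v\le2I_v$ gives $\frac{\ud}{\ud t}\mathcal C_{\rm OT}\le-\Ent_x-\gamma\mathcal C_{\rm OT}+3I_v$. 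The constant $3$ is what the condition $\theta\le\Gamma/12$ is calibrated to. Without this cancellation, your Step 3 constant $C_1$ and the ``$\dots$'' are never actually produced.
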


\subsection*{Proof Sketch}
The two assertions in Proposition~\ref{prop:corrector} are proved as follows.

The equivalence \eqref{eq:H-equivalence} is a direct consequence of the
corrector bound $\abs{\mathcal C_{\rm OT}(g)}\le\rho^{-1/2}\Ent(g)$ established
in Lemma~\ref{lem:C-small}, combined with the choice $\eps=\theta\sqrt\rho$.

For the differential inequality \eqref{eq:H-diff}, write
$\mathcal H_\eps(g_t)=\Ent(g_t)+\eps\mathcal C_{\rm OT}(g_t)$ and differentiate
the two terms separately.  The entropy dissipation identity
\eqref{eq:entropy-diss} gives
\[
        \frac{\ud}{\ud t}\Ent(g_t)=-\gamma I_v(g_t).
\]
The time derivative of the Wasserstein corrector
$\mathcal C_{\rm OT}(g_t)$ is obtained from
the distributional Wasserstein second-variation inequality for the squared
distance $\Wtwo^2(q_t\mu_x,\mu_x)$ (\Cref{sec:acceleration}), together with the
stress estimate for Brenier maps (\Cref{sec:stress}), which controls the
singular contribution of the Hessian of the convex transport potential.
Assembling these two pieces yields (see \eqref{eq:C-final})
\[
        \frac{\ud}{\ud t}\mathcal C_{\rm OT}(g_t)
        \le-\Ent_x(q_t)-\gamma\mathcal C_{\rm OT}(g_t)+3I_v(g_t).
\]
Combining the two derivatives and using the corrector
bound to absorb the residual terms closes the Lyapunov inequality
(\Cref{sec:closure}) and gives \eqref{eq:H-diff} with the explicit rate
$\lambda_\Gamma\sqrt\rho$.

Theorem~\ref{thm:main} for arbitrary finite-entropy initial data is then obtained from Proposition~\ref{prop:corrector} by entropy-dense approximation with regular solutions and by lower semicontinuity of the entropy and Fisher information; this is carried out in \Cref{sec:remove-regularity}.

\subsection*{Regularity class used in the proof}

The regularity class used in Proposition~\ref{prop:corrector} is the
following. It is not an additional assumption on the initial datum in
Theorem~\ref{thm:main}; it is only used to facilitate the various calculations and estimates.

\begin{definition}[Regular finite-entropy solution]\label{def:regular}
Let $I\subset\RR$ be a compact interval with nonempty interior.  A solution
$g_t$ of \eqref{eq:forward} on $I$ is called regular if the following
properties hold.
\begin{enumerate}[label=\textup{(R\arabic*)},leftmargin=2.4em]
\item \label{reg:basic} There is $c_I>0$ such that $g_t\ge c_I$ for all
$t\in I$.  Moreover $g\in C^1(I;C^\infty(\RR^{2d}))$, $g_t$ solves
\eqref{eq:forward} pointwise, $\Ent(g_t)<\infty$, $I_v(g_t)<\infty$, and
$t\mapsto \Ent(g_t)$ is absolutely continuous with
\begin{equation}\label{eq:regular-entropy-diss}
        \frac{\ud}{\ud t}\Ent(g_t)=-\gamma I_v(g_t)
        \qquad\text{for a.e.\ }t\in I.
\end{equation}
All integrations by parts in $v$ used to prove this identity are justified by
absolute convergence of the corresponding boundary terms.

\item \label{reg:moment-eq} The fields
\[
        q_t=\Pi_v g_t,\quad
        j_t=\Pi_v(v\,g_t),\quad
        M_t=\Pi_v(v\otimes v\,g_t)
\]
are $C^1$ in $t$ and $C^\infty$ in $x$.  The lower bound in
Definition~\ref{def:regular}\,\ref{reg:basic} gives $q_t\ge c_I$, so $m_t=j_t/q_t$ is $C^1$ in $t$ and
$C^\infty$ in $x$.  The moment equations
\begin{align}
        \partial_t q_t&=\nabla_x^*j_t,                                      \label{eq:q-eq-in-def}\\
        \partial_t j_t&=-\nabla_x q_t
        +\nabla_x^*\left(\frac{j_t\otimes j_t}{q_t}\right)
        +\nabla_x^*\Theta_t-\gamma j_t                                      \label{eq:j-eq-in-def}
\end{align}
hold pointwise, with $\Theta_t$ defined as in \eqref{eq:M-Theta-def}.  The
continuous representative of $I_d+\Theta_t/q_t$ coincides with the conditional
covariance matrix of $v$ given $x$ and is positive semidefinite everywhere.

\item \label{reg:flow} The continuity equation
\[
        \partial_t(q_t\mu_x)+\divop(m_tq_t\mu_x)=0
\]
has the classical characteristic representation on $I$: for every
$t$ in the interior of $I$ and all sufficiently small $s$, the flow generated by
$\tau\mapsto m_{t+\tau}$ exists globally, transports $q_t\mu_x$ to
$q_{t+s}\mu_x$, and satisfies
\begin{equation}\label{eq:flow-expansion}
        X_s(x)=x+s m_t(x)+\frac{s^2}{2}a_t(x)+o_{L^2(q_t\mu_x)}(s^2),
        \qquad
        a_t=\partial_t m_t+(m_t\cdot\nabla)m_t.
\end{equation}
The remainder is locally uniform for $t$ in compact subsets of the interior of
$I$, and $m_t,a_t\in L^2(q_t\mu_x)$ locally uniformly in $t$.

\item \label{reg:cutoffs} Let $\xi_t=x-T_{q_t}(x)$.  The fields
$\nabla q_t$ and $\nabla_x^*\Theta_t$ belong to $L^2(q_t^{-1}\mu_x)$ locally
uniformly in $t$.  There exists a standard sequence of cutoff functions
$\chi_R\in C_c^\infty(\RR^d)$, $0\le\chi_R\le1$, $\chi_R\uparrow1$,
$\abs{\nabla\chi_R}\le C/R$, such that, locally uniformly for $t$ in the
interior of $I$,
\begin{align}
        &\int \chi_R\nabla q_t\cdot\xi_t\ud\mu_x
          \longrightarrow \int \nabla q_t\cdot\xi_t\ud\mu_x,            \label{eq:cutoff-A-def}\\
        &\int \xi_t\cdot\nabla_x^*(\chi_R\Theta_t)\ud\mu_x
          \longrightarrow \int \xi_t\cdot\nabla_x^*\Theta_t\ud\mu_x,       \label{eq:cutoff-stress-def}\\
        &\int q_t\nabla\chi_R\cdot\xi_t\ud\mu_x\longrightarrow0.     \label{eq:cutoff-boundary-def}
\end{align}
The right-hand sides of \eqref{eq:cutoff-A-def} and
\eqref{eq:cutoff-stress-def} are well-defined;
in particular,
$\xi_t\cdot\nabla q_t,\xi_t\cdot\nabla_x^*\Theta_t\in L^1(\mu_x)$ locally
uniformly in $t$.  
\end{enumerate}
\end{definition}


\subsection*{Organization of the rest of the paper}

\Cref{sec:entropy-current} collects the entropy dissipation identity, the
velocity Fisher information, the current $J$, and the corrector size estimate
(Lemmas~\ref{lem:current} and~\ref{lem:C-small}) used both in the equivalence
\eqref{eq:H-equivalence} and in closing the Lyapunov inequality.
\Cref{sec:acceleration} derives the moment equations for $q_t,j_t,M_t$ and
establishes the distributional Wasserstein acceleration inequality for
absolutely continuous curves of position marginals.  \Cref{sec:stress} proves
the stress estimate for Brenier maps, including the singular
contribution of the Hessian.  \Cref{sec:closure} combines these ingredients to
close the Lyapunov inequality and conclude the proof of
Proposition~\ref{prop:corrector}.  Finally, \Cref{sec:remove-regularity}
removes the regularity assumption by approximation, completing the proof of
Theorem~\ref{thm:main}.

\section{Entropy, current, and size of the corrector}\label{sec:entropy-current}

For regular solutions,
\begin{equation}\label{eq:entropy-diss}
        \frac{\ud}{\ud t}\Ent(g_t)=-\gamma I_v(g_t),
\end{equation}
where $I_v$ is the velocity Fisher information, given by
\begin{equation}\label{eq:Iv}
        I_v(g)=\int \frac{\abs{\nabla_v g}^2}{g}\ud\mu.
\end{equation}
For general finite-entropy initial data, the corresponding integrated
inequality is obtained by approximation from the regular identity (see
Corollary~\ref{cor:entropy-integrated} below):
\begin{equation}\label{eq:entropy-integrated}
        \Ent(\mc{P}_tg_0)+\gamma\int_0^t I_v(\mc{P}_sg_0)\ud s\le \Ent(g_0),
        \qquad t\ge0.
\end{equation}

The Gaussian logarithmic Sobolev inequality of Gross \cite{Gross}, applied fiberwise, gives
\begin{equation}\label{eq:Ev-Iv}
        \Ent_v(g)\le \frac{1}{2}I_v(g).
\end{equation}

For the spatial marginal $q = \Pi_v g$ and current $j = \Pi_v(vg)$, define the average current energy
\begin{equation}\label{eq:J}
        J(g)=\int \frac{\abs{j(x)}^2}{q(x)}\,\mu_x(\ud x).
\end{equation}
Recall the convention $j=0$ on $\{q=0\}$.  If the conditional first moment in
\eqref{eq:j-m-def} is not finite on a set of positive $q\mu_x$-measure, then
$J(g)$ is defined to be $+\infty$. 

\begin{lemma}[Current bounds]\label{lem:current}
For every probability density ratio $g$,
\begin{equation}\label{eq:J-Ev}
        J(g)\le 2\Ent_v(g),
        \qquad
        J(g)\le I_v(g),
\end{equation}
where the inequalities are understood in the extended sense.  In particular,
finite velocity entropy (and hence finite total entropy) makes the current $j$
well-defined and gives $J(g)<\infty$.
\end{lemma}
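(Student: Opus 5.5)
The plan is to prove both bounds fiberwise: fix $x$ with $q(x)>0$ and work with the conditional density $h_x=g(x,\cdot)/q(x)$, a probability density with respect to $\kappa$. Since $j(x)=q(x)\int v\,h_x\,\kappa(\ud v)$, we have $\abs{j(x)}^2/q(x)=q(x)\abs{m(x)}^2$, where $m(x)=\int v\,h_x\ud\kappa$ is the conditional mean. Both inequalities in \eqref{eq:J-Ev} will then follow by integrating a pointwise-in-$x$ inequality against $q\,\mu_x$, using $\Ent_v(g)=\int q\Ent_\kappa(h_x)\ud\mu_x$ and $I_v(g)=\int q\,I_\kappa(h_x)\ud\mu_x$. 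The latter holds because $\nabla_v g=q\nabla_v h_x$, so $\abs{\nabla_v g}^2/g=q\abs{\nabla_v h_x}^2/h_x$. On $\{q=0\}$ both sides vanish by convention.

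For the entropy bound, I will use the Gibbs variational formula, i.e.\ the Donsker--Varadhan duality $\int\phi\,h\ud\kappa\le\Ent_\kappa(h)+\log\int\e^{\phi}\ud\kappa$, with $\phi(v)=\lambda\,u\cdot v$ for a unit vector $u$ and $\lambda>0$. Since $\log\int\e^{\lambda u\cdot v}\ud\kappa=\lambda^2/2$, this gives $\lambda\,u\cdot m\le\Ent_\kappa(h_x)+\lambda^2/2$. Optimizing over $\lambda$ yields $(u\cdot m)^2\le2\Ent_\kappa(h_x)$, and taking $u=m/\abs m$ gives $\abs m^2\le2\Ent_\kappa(h_x)$. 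This is Talagrand's $T_2$ bound for the Gaussian applied to a translation, and it is also the claimed integrability statement. The step needing care is that the duality must be justified when the first moment is a priori unknown. Applying the variational formula to the truncated, bounded test function $\phi=\lambda\,u\cdot v\,\mathbf 1_{\{\abs v\le R\}}$ and letting $R\to\infty$ gives $\int\abs v\,h_x\ud\kappa<\infty$ whenever $\Ent_\kappa(h_x)<\infty$. This shows $j$ is well defined $q\mu_x$-a.e.\ once $\Ent_v(g)<\infty$, and the extended-sense statement follows.

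For the Fisher information bound, I will use integration by parts against the Gaussian: $\int v\,h\ud\kappa=\int\nabla_v h\ud\kappa$. Then Cauchy--Schwarz gives $\abs{m}^2=\bigl|\int(\nabla_v h/\sqrt h)\sqrt h\ud\kappa\bigr|^2\le I_\kappa(h_x)\int h_x\ud\kappa=I_\kappa(h_x)$. To avoid justifying boundary terms, one could instead combine the first bound with Gross's inequality \eqref{eq:Ev-Iv} fiberwise: $\abs m^2\le2\Ent_\kappa(h_x)\le I_\kappa(h_x)$. This gives $J\le 2\Ent_v\le I_v$ immediately, and I would present it that way, since \eqref{eq:Ev-Iv} is already available.

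The main obstacle is only measure-theoretic bookkeeping: choosing a measurable version of $h_x$, handling the sets where $q=0$ or where the entropy or moment is infinite, and the truncation argument above. The inequalities themselves are standard Gaussian facts.
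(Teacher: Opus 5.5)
Your route is the same as the paper's. You use fiberwise Donsker--Varadhan duality with a linear test function, optimize to get $\abs{m(x)}^2\le 2\Ent_\kappa(h_x)$, and integrate against $q\,\mu_x$. You then apply Gross's inequality \eqref{eq:Ev-Iv} fiberwise to obtain $J\le 2\Ent_v\le I_v$. The paper takes the same Gross route rather than the integration-by-parts alternative you also mention.

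The one step that fails as written is the integrability argument. With the signed test function $\phi=\lambda\,u\cdot v\,\mathbf 1_{\{\abs v\le R\}}$ you only obtain an upper bound on the signed truncated integral $\int (u\cdot v)\,\mathbf 1_{\{\abs v\le R\}}h_x\ud\kappa$. Using both $u$ and $-u$ gives only a bound on $\abs{\int v\,\mathbf 1_{\{\abs v\le R\}}h_x\ud\kappa}$. That cannot exclude infinite positive and negative parts cancelling inside the symmetric truncation, so it does not give $\int\abs v\,h_x\ud\kappa<\infty$.

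The fix is to use a nonnegative test function, for example $\phi=\lambda\abs v\,\mathbf 1_{\{\abs v\le R\}}$. Monotone convergence as $R\to\infty$ then gives $\lambda\int\abs v\,h_x\ud\kappa\le\Ent_\kappa(h_x)+\log\int\e^{\lambda\abs v}\ud\kappa<\infty$. This is the paper's first step; it applies the duality directly to $\phi=\lambda\abs v$. Once the first moment is finite, your linear test function $\lambda\,u\cdot v$ is legitimate and the rest of your argument goes through unchanged.
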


\begin{proof}
Disintegrate $g\mu$ as $q(x)\mu_x(\ud x)h_x(v)\kappa(\ud v)$.  For fixed $x$
with $q(x)>0$ and $\Ent_\kappa(h_x)<\infty$, use the variational formula
\begin{equation}\label{eq:entropyvar}
        \Ent_\kappa(h_x)
        =\sup_{\phi}\left\{\int \phi h_x\ud\kappa
        -\log\int \e^\phi\ud\kappa\right\},
\end{equation}
or equivalently
\[
        \int \phi h_x\ud\kappa
        \le \Ent_\kappa(h_x)+\log\int \e^\phi\ud\kappa
\]
whenever the exponential moment on the right is finite.  Applying this to
$\phi(v)=\lambda\abs{v}$ with any $\lambda>0$ first gives
$\int\abs{v} h_x\ud\kappa<\infty$, since Gaussian measures have finite
exponential moments of linear growth.  Applying it next to $\phi(v)=a\cdot v$
then yields
\[
        a\cdot\int v h_x(v)\,\kappa(\ud v)
        \le \Ent_\kappa(h_x)+\log\int \e^{a\cdot v}\,\kappa(\ud v)
        = \Ent_\kappa(h_x)+\frac{\abs{a}^2}{2}.
\]
Optimizing over $a$ gives
\[
        \Bigl\lvert \int v h_x(v)\,\kappa(\ud v) \Bigr\rvert^2\le2\Ent_\kappa(h_x).
\]
Since $j(x)=q(x)\int v h_x(v)\,\kappa(\ud v)$ on $\{q>0\}$ and $j=0$ on $\{q=0\}$, multiplication by $q$ and integration gives $J\le2\Ent_v$.  The Fisher-information bound $J\le I_v$ then follows by combining this with the Gaussian logarithmic Sobolev inequality \eqref{eq:Ev-Iv}.
\end{proof}

\begin{lemma}[Corrector bound]\label{lem:C-small}
For every finite-entropy probability density ratio $g$,
\begin{equation}\label{eq:C-small}
        \abs{\mathcal C_{\rm OT}(g)}
        \le \sqrt{J(g)}\,\Wtwo(q\mu_x,\mu_x)
        \le \frac{1}{\sqrt\rho}\Ent(g).
\end{equation}
Moreover, if $I_v(g)<\infty$, then
\begin{equation}\label{eq:C-I-Ex}
        \abs{\mathcal C_{\rm OT}(g)}
        \le \sqrt{I_v(g)}\sqrt{\frac{2}{\rho}\Ent_x(q)}.
\end{equation}
\end{lemma}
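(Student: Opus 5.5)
The plan is to apply Cauchy--Schwarz in $L^2(q\mu_x)$ and then use Talagrand's inequality and the current bounds of Lemma~\ref{lem:current}.

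\emph{Step 1: the first inequality in \eqref{eq:C-small}.} Since $j=0$ on $\{q=0\}$, write $j=m q$ with $m=j/q$ on $\{q>0\}$. Then
\[
        \abs{\mathcal C_{\rm OT}(g)}=\Bigl\lvert\int m\cdot\xi_q\,q\ud\mu_x\Bigr\rvert
        \le\Bigl(\int\abs{m}^2q\ud\mu_x\Bigr)^{1/2}\Bigl(\int\abs{x-T_q(x)}^2q\ud\mu_x\Bigr)^{1/2}.
\]
The first factor is $\sqrt{J(g)}$. By optimality of the Brenier map $T_q$, the second factor is exactly $\Wtwo(q\mu_x,\mu_x)$.

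\emph{Well-definedness.} The same Cauchy--Schwarz bound shows $j\cdot\xi_q\in L^1(\mu_x)$ once both factors are finite. Finite entropy gives $J<\infty$ by Lemma~\ref{lem:current}. By \eqref{eq:E-split}, $\Ent_x(q)\le\Ent(g)<\infty$, so Talagrand's inequality gives $\Wtwo<\infty$.

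\emph{Step 2: the second inequality in \eqref{eq:C-small}.} The LSI with constant $\rho$ implies Talagrand's inequality $\Wtwo^2(q\mu_x,\mu_x)\le\frac{2}{\rho}\Ent_x(q)$, as cited before \eqref{eq:xi}. Together with $J\le 2\Ent_v(g)$ from \eqref{eq:J-Ev}, this gives
\[
        \sqrt{J}\,\Wtwo\le\sqrt{2\Ent_v(g)}\sqrt{\tfrac{2}{\rho}\Ent_x(q)}
        =\frac{2}{\sqrt\rho}\sqrt{\Ent_v(g)\Ent_x(q)}
        \le\frac{1}{\sqrt\rho}\bigl(\Ent_v(g)+\Ent_x(q)\bigr).
\]
The last step is AM--GM, $2\sqrt{ab}\le a+b$. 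The decomposition \eqref{eq:E-split} identifies the right-hand side with $\rho^{-1/2}\Ent(g)$.

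\emph{Step 3: the bound \eqref{eq:C-I-Ex}.} Use the other current bound $J\le I_v(g)$ from \eqref{eq:J-Ev}, together with Talagrand's inequality, in the Step 1 estimate.

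There is no real obstacle. The only care needed is the measure-theoretic bookkeeping: $j=0$ on $\{q=0\}$; $T_q$ is defined $q\mu_x$-a.e., so the integrand vanishes wherever $T_q$ may be undefined; and in the extended-sense cases the inequalities hold trivially when the right side is infinite.
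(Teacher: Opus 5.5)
Your proposal is correct and follows the paper's proof essentially verbatim: Cauchy--Schwarz in $L^2(q\mu_x)$, the fact that the Brenier map realizes $\Wtwo$, Talagrand's inequality, the current bound $J\le 2\Ent_v$ combined with Young/AM--GM for \eqref{eq:C-small}, and $J\le I_v$ for \eqref{eq:C-I-Ex}. Your added well-definedness remark, that both factors are finite for finite-entropy $g$, is a harmless and useful supplement that the paper leaves implicit.
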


\begin{proof}
Talagrand's inequality following from the log-Sobolev inequality \eqref{eq:lsi}
yields (see \cite{OttoVillani} and \cite{GigliLedoux})
\begin{equation}\label{eq:T2}
        \Wtwo^2(q\mu_x,\mu_x)\le \frac{2}{\rho}\Ent_x(q).
\end{equation}
The Brenier map realizes the Wasserstein-$2$ distance, so
\begin{equation}\label{eq:W2-norm}
        \int q\abs{\xi_q}^2\ud\mu_x=\Wtwo^2(q\mu_x,\mu_x).
\end{equation}
The Cauchy--Schwarz inequality and Lemma~\ref{lem:current} give
\begin{equation*}
    \begin{aligned}
        \abs{\mathcal C_{\rm OT}(g)}
        & \le \sqrt{J(g)}\,\Wtwo(q\mu_x,\mu_x)
        \le \sqrt{2\Ent_v(g)}\sqrt{\frac{2}{\rho}\Ent_x(q)} \\
        & \le \rho^{-1/2}(\Ent_x(q)+\Ent_v(g))=\rho^{-1/2}\Ent(g),
    \end{aligned}
\end{equation*}
where the last inequality uses Young's inequality.
The estimate \eqref{eq:C-I-Ex} follows in the same way from $J\le I_v$.
\end{proof}

With $\eps=\theta\sqrt\rho$, \eqref{eq:C-small} immediately implies the
equivalence \eqref{eq:H-equivalence} for every finite-entropy density.

\section{Moment equations and the Wasserstein acceleration inequality}\label{sec:acceleration}

We begin by recording the moment equations for $q$ and $j$, using the adjoint
$\nabla_x^*$ from \eqref{eq:nabla-star}.  With $M$ and $\Theta$ defined in
\eqref{eq:M-Theta-def}, set
$S=M-q\,I_d=\frac{j\otimes j}{q}+\Theta$.  For regular solutions, integrating
\eqref{eq:forward} against $1$ and $v$ gives
\begin{equation}\label{eq:q-eq}
        \partial_t q=\nabla_x^*j,
\end{equation}
\begin{equation}\label{eq:j-eq-S}
        \partial_t j=-\nabla_xq+\nabla_x^*S-\gamma j,
\end{equation}
which, in terms of $\Theta$ and $m=j/q$, reads
\begin{equation}\label{eq:j-eq-P}
        \partial_t j
        =-\nabla_xq+
        \nabla_x^*\left(\frac{j\otimes j}{q}\right)
        +\nabla_x^*\Theta-\gamma j.
\end{equation}

Write $\nu_t=q_t\mu_x$ and $T_t=T_{q_t}$ for the Brenier map from $\nu_t$ to
$\mu_x$, and set $\xi_t=x-T_t$.  The equation \eqref{eq:q-eq} is equivalent to the
continuity equation
\begin{equation}\label{eq:continuity}
        \partial_t\nu_t+\divop(m_t\nu_t)=0,
        \qquad m_t=\frac{j_t}{q_t}.
\end{equation}
Define
\begin{equation}\label{eq:D-C}
        D(t)=\frac{1}{2}W_2^2(\nu_t,\mu_x),
        \qquad
        C(t)=\mathcal C_{\rm OT}(g_t)=\int \xi_t\cdot m_t\ud\nu_t.
\end{equation}
By Definition~\ref{def:regular}\,\ref{reg:flow}, $m_t\in L^2(q_t\mu_x)$
locally uniformly in time and \eqref{eq:continuity} holds in the classical
distributional sense.  The
Benamou--Brenier continuity-equation estimate therefore implies that
$t\mapsto\nu_t$ is a $2$-absolutely continuous curve in $(\mathcal{P}_2,W_2)$,
with metric derivative bounded by $\norm{m_t}_{L^2(\nu_t)}$.  For absolutely
continuous Wasserstein curves, the first-variation formula for the squared
Wasserstein distance applies; see, for instance, \cite{AGS} and
\cite{VillaniTopics}.  Hence
\begin{equation}\label{eq:first-variation}
        D'(t)=C(t)\qquad\text{for a.e.\ }t.
\end{equation}
Next we calculate the Wasserstein acceleration, i.e., the second derivative of $D(t)$.

\begin{lemma}[Acceleration inequality]\label{lem:acceleration}
For every regular solution, the following inequality holds in the sense of
distributions in time:
\begin{equation}\label{eq:acceleration}
        \frac{\ud}{\ud t}\mathcal C_{\rm OT}(g_t)
        \le
        J(g_t)+\int \xi_t\cdot
        \left[\partial_t j_t-
        \nabla_x^*\left(\frac{j_t\otimes j_t}{q_t}\right)\right]\ud\mu_x.
\end{equation}
If the Wasserstein curve is twice differentiable at $t$ and the right-hand side
of \eqref{eq:acceleration} is finite at that time, the same inequality holds
pointwise.
\end{lemma}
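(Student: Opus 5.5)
The plan is to compute a second-order upper expansion of $D(t)=\tfrac12\Wtwo^2(\nu_t,\mu_x)$ along the characteristic flow of Definition~\ref{def:regular}\,\ref{reg:flow}, using the Brenier map at time $t$ as a competitor coupling at time $t+s$. Fix $t$ in the interior of $I$ and let $X_s$ be the flow in \eqref{eq:flow-expansion}, so that $(X_s)_\#\nu_t=\nu_{t+s}$. The plan $(X_s,T_t)_\#\nu_t$ couples $\nu_{t+s}$ with $\mu_x$. Hence
\[
        D(t+s)\le \frac12\int \abs{X_s(x)-T_t(x)}^2\,\nu_t(\ud x),
\]
with equality at $s=0$. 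Substituting $X_s=x+s m_t+\tfrac{s^2}{2}a_t+o(s^2)$ and using $\xi_t\in L^2(\nu_t)$ together with $m_t,a_t\in L^2(\nu_t)$ gives
\[
        D(t+s)\le D(t)+s\int\xi_t\cdot m_t\ud\nu_t+\frac{s^2}{2}\Bigl(\int\abs{m_t}^2\ud\nu_t+\int\xi_t\cdot a_t\ud\nu_t\Bigr)+o(s^2).
\]
The first-order coefficient is $C(t)$, consistent with \eqref{eq:first-variation}. Note that $\int\abs{m_t}^2\ud\nu_t=J(g_t)$.

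Next we identify the acceleration term. Since $q_t m_t=j_t$, we have $q_t\partial_t m_t=\partial_t j_t-m_t\partial_t q_t=\partial_t j_t-m_t\nabla_x^*j_t$, by \eqref{eq:q-eq}. A direct computation gives
\[
        \nabla_x^*\Bigl(\frac{j\otimes j}{q}\Bigr)_i=-\partial_k(m_i j_k)+\partial_kU\,m_ij_k=m_i\nabla_x^*j-j_k\partial_km_i .
\]
Therefore $q_t a_t=q_t\partial_tm_t+q_t(m_t\cdot\nabla)m_t=\partial_tj_t-\nabla_x^*(j_t\otimes j_t/q_t)$ pointwise. It follows that $\int\xi_t\cdot a_t\ud\nu_t$ is exactly the integral on the right of \eqref{eq:acceleration}. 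So, pointwise in $t$, we have the Taylor-type upper bound $D(t+s)\le D(t)+sC(t)+\tfrac{s^2}{2}R(t)+o(s^2)$, where $R(t)$ denotes the right-hand side of \eqref{eq:acceleration}.

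To convert this into a distributional statement about $C=D'$, I would use a standard second-difference argument. The bound says that the symmetric second difference satisfies
\[
        \frac{D(t+s)+D(t-s)-2D(t)}{s^2}\le R(t)+o(1),
\]
the first-order terms cancelling. By \ref{reg:flow}, the $o(1)$ is locally uniform in $t$. Take a nonnegative test function $\varphi\in C_c^\infty$ supported in the interior of $I$, multiply by $\varphi$, integrate, and move the difference quotient onto $\varphi$. This gives
\[
        \int D\,\frac{\varphi(t+s)+\varphi(t-s)-2\varphi(t)}{s^2}\ud t\le\int\varphi R\ud t+o(1).
\]
Letting $s\to0$ yields $\int D\varphi''\le\int\varphi R$, i.e.\ $-\int C\varphi'\le\int\varphi R$, using $D'=C$ a.e.\ and the absolute continuity of $D$. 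This is precisely \eqref{eq:acceleration} in the sense of distributions. When $D$ is twice differentiable at $t$, the same second-difference bound gives $D''(t)\le R(t)$ pointwise.

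The main obstacle is controlling the remainder and the integrability needed to pass to the limit. Specifically, one must justify that the $o_{L^2(\nu_t)}(s^2)$ remainder, paired against $\xi_t$ and against $m_t,a_t$, stays $o(s^2)$ uniformly in $t$. One must also check that $R(t)$ is locally integrable in $t$, or else handle it via the $L^2$ bounds on $m_t,a_t$ and Cauchy--Schwarz with $\norm{\xi_t}_{L^2(\nu_t)}=\Wtwo(\nu_t,\mu_x)$, which is bounded by Talagrand's inequality \eqref{eq:T2}. I would try to handle both points with exactly these ingredients, relying on the local uniformity built into \ref{reg:flow}.
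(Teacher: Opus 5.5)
Your proposal is correct and follows the paper's proof essentially step for step. You use the competitor coupling $(X_s,T_t)_\#\nu_t$, the second-order expansion with coefficient $J(g_t)+\int\xi_t\cdot a_t\ud\nu_t$, and the symmetric second difference tested against a nonnegative test function, which gives $D''\le Q$ and hence $C'\le Q$ in $\mathcal D'(I)$. The only cosmetic difference is how you identify $q_ta_t=\partial_tj_t-\nabla_x^*(j_t\otimes j_t/q_t)$: you compute it directly with the product rule and \eqref{eq:q-eq}, while the paper reads it off from the conservative identity $\partial_t(rq_tm_t)+\divop(rq_tm_t\otimes m_t)=rq_ta_t$; the two computations are equivalent.
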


\begin{proof}
Fix $t$ in the interior of the time interval.  By
Definition~\ref{def:regular}\,\ref{reg:flow}, the flow $X_s$ generated by
$m_{t+s}$ satisfies $(X_s)_\#\nu_t=\nu_{t+s}$ and
\[
        X_s(x)=x+s m_t(x)+\frac{s^2}{2}a_t(x)+o_{L^2(\nu_t)}(s^2),
        \qquad
        a_t=\partial_t m_t+(m_t\cdot\nabla)m_t.
\]
Since $(X_s)_\#\nu_t=\nu_{t+s}$ and $(T_t)_\#\nu_t=\mu_x$, the map
$x\mapsto (X_s(x),T_t(x))$ defines an admissible coupling of $\nu_{t+s}$ and
$\mu_x$.  Therefore
\begin{align*}
        D(t+s)
        &\le \frac{1}{2}\int\abs{X_s(x)-T_t(x)}^2\,\nu_t(\ud x)  \\
        &=D(t)+s\int \xi_t\cdot m_t\ud\nu_t
        +\frac{s^2}{2}\int\bigl(\abs{m_t}^2+\xi_t\cdot a_t\bigr)\ud\nu_t+o(s^2).
\end{align*}
The $o(s^2)$ term follows from the locally uniform $L^2(\nu_t)$ remainder in
\eqref{eq:flow-expansion}, together with the local $L^2(\nu_t)$ bounds on
$\xi_t$, $m_t$, and $a_t$.  Define
\[
        Q(t)=\int\bigl(\abs{m_t}^2+\xi_t\cdot a_t\bigr)\ud\nu_t.
\]
Applying the estimates with $s=h$ and $s=-h$ gives
\[
        D(t+h)+D(t-h)-2D(t)\le h^2Q(t)+o(h^2)
\]
locally uniformly.  Let $\omega\in C_c^\infty(I)$ be nonnegative.  Multiplying by
$\omega(t)$, integrating in $t$, changing variables in the two shifted terms, and
letting $h\downarrow0$ yields
\begin{equation}\label{eq:D-second-distribution}
        \int D(t)\omega''(t)\ud t\le \int Q(t)\omega(t)\ud t.
\end{equation}
Thus $D''\le Q$ in $\mathcal D'(I)$.  Combining this with
\eqref{eq:first-variation} gives $C'\le Q$ in $\mathcal D'(I)$.

It remains to identify the expression of $Q(t)$ more explicitly.  Recall that $r(x)=Z_x^{-1}\e^{-U(x)}$ is the
Lebesgue density of $\mu_x$, so the continuity equation \eqref{eq:continuity}
for $\nu_t=q_t\mu_x$ reads
\[
        \partial_t(r q_t)+\divop(rq_tm_t)=0,
\]
and the conservative identity is
\[
        \partial_t(rq_tm_t)+\divop(rq_tm_t\otimes m_t)=rq_ta_t.
\]
Dividing by $r$ and using the definition of $\nabla_x^*$ gives
\begin{equation}\label{eq:qa-identity}
        q_ta_t
        =\partial_t j_t-
        \nabla_x^*\left(\frac{j_t\otimes j_t}{q_t}\right).
\end{equation}
Finally, $\int\abs{m_t}^2\ud\nu_t=J(g_t)$, so $C'\le Q$ is exactly
\eqref{eq:acceleration}.
\end{proof}

Using \eqref{eq:j-eq-P} in \eqref{eq:acceleration}, we obtain, in the sense of
distributions,
\begin{equation}\label{eq:C-raw}
        \frac{\ud}{\ud t}\mathcal C_{\rm OT}(g_t)
        \le J(g_t)-\gamma\mathcal C_{\rm OT}(g_t)
        -\int \nabla q_t\cdot\xi_t\ud\mu_x
        +\int \xi_t\cdot\nabla_x^*\Theta_t\ud\mu_x.
\end{equation}
The last two spatial terms on the right-hand side will be estimated next.

\section{Stress estimate for Brenier maps}\label{sec:stress}

The purpose of this section is to estimate the two spatial terms in
\eqref{eq:C-raw}.  For a density $q$ with Brenier map $T_q$ and
$\xi_q=x-T_q$, and for a stress field $\Theta$, as defined in \eqref{eq:M-Theta-def}, write
\begin{equation}\label{eq:spatial-pairings}
        \mathfrak A(q)=\int \nabla q\cdot\xi_q\ud\mu_x,
        \qquad
        \mathfrak S(q,\Theta)=\int \xi_q\cdot\nabla_x^*\Theta\ud\mu_x,
\end{equation}
whenever the integrals are well-defined.  Our goal is to bound the combination
$-\mathfrak A(q)+\mathfrak S(q,\Theta)$ by entropy terms.

We first record the fiberwise Gaussian estimate used to control the conditional
covariance.

\begin{lemma}[Conditional covariance duality]\label{lem:cov-dual}
Let $0<\beta<1/2$.  Let $h$ be a probability density with respect to $\kappa$,
with mean $m$ and covariance matrix $\Sigma$.  Let $K=K^T\le I_d$.  Then
\begin{equation}\label{eq:cov-dual}
        (\Sigma-I_d):K
        \le \frac{1}{\beta}\Ent_\kappa(h)+\frac{1}{\beta}\Lambda_\beta(K),
\end{equation}
where
\begin{equation}\label{eq:Lambda}
        \Lambda_\beta(K)=
        \log\int \exp\{\beta((z\otimes z-I_d):K)\}\,\kappa(\ud z).
\end{equation}
If the eigenvalues of $K$ are $1-\lambda_i$ with $\lambda_i\ge0$, then
\begin{equation}\label{eq:Lambda-bound}
        \frac{1}{\beta}\Lambda_\beta(K)
        \le \sum_i(\lambda_i-1-\log\lambda_i),
\end{equation}
with the convention that the right-hand side is $+\infty$ when some $\lambda_i=0$.
\end{lemma}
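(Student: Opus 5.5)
The first inequality \eqref{eq:cov-dual} follows from the Donsker--Varadhan variational formula \eqref{eq:entropyvar}, already used in the proof of Lemma~\ref{lem:current}. I apply it with the quadratic test function $\phi(z)=\beta\,(z\otimes z-I_d):K$ and obtain
\[
        \beta\int \bigl((z\otimes z-I_d):K\bigr)h(z)\,\kappa(\ud z)
        \le \Ent_\kappa(h)+\Lambda_\beta(K).
\]
The left side equals $\beta(\Sigma+m\otimes m-I_d):K$, since $\int z\otimes z\,h\ud\kappa=\Sigma+m\otimes m$. The extra term $m\otimes m:K=m\cdot Km$ has no sign in general, because $K$ is only bounded above. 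I would therefore apply the formula to the centered density instead. Writing $\tilde h$ for the law of $z-m$ under $h\kappa$, the relative entropy with respect to $\kappa$ of the translated law is $\Ent_\kappa(h)-\tfrac12\abs{m}^2\le\Ent_\kappa(h)$; this is the Gaussian chain rule, since $\Ent_\kappa(h)=\Ent_\kappa(\tilde h)+\tfrac12\abs m^2$. Applying the variational formula to $\tilde h$, whose second moment is exactly $\Sigma$, gives $\beta(\Sigma-I_d):K\le \Ent_\kappa(h)+\Lambda_\beta(K)$. Dividing by $\beta$ gives \eqref{eq:cov-dual}. If $\Lambda_\beta(K)=+\infty$ there is nothing to prove. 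When it is finite, the test function has the required exponential moment, so the formula is legitimate.

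For \eqref{eq:Lambda-bound} I diagonalize $K$ by an orthogonal change of variables, which leaves $\kappa$ invariant. The integral then factorizes into one-dimensional Gaussian integrals:
\[
        \Lambda_\beta(K)=\sum_i\Bigl[-\beta(1-\lambda_i)
        -\tfrac12\log\bigl(1-2\beta(1-\lambda_i)\bigr)\Bigr].
\]
This is finite because $1-2\beta(1-\lambda_i)\ge 1-2\beta>0$, using $\lambda_i\ge0$ and $\beta<1/2$. It remains to show, for each $\lambda\ge0$,
\[
        \frac1\beta\Bigl[-\beta(1-\lambda)-\tfrac12\log\bigl(1-2\beta(1-\lambda)\bigr)\Bigr]\le \lambda-1-\log\lambda .
\]
With $a=1-\lambda\le1$, the left side is $f(\beta)=-a-\frac{1}{2\beta}\log(1-2\beta a)$. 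The map $\beta\mapsto-\frac{1}{2\beta}\log(1-2\beta a)=\sum_{k\ge1}\frac{(2\beta)^{k-1}a^k}{k}$ behaves differently according to the sign of $a$.

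\emph{Case $a\ge0$.} All terms of the series are nonnegative, so $f$ is nondecreasing in $\beta$. Its supremum over $\beta<1/2$ is the limit at $\beta=1/2$, namely $-a-\log(1-a)=\lambda-1-\log\lambda$.

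\emph{Case $a<0$ ($\lambda>1$).} Concavity of $\log$ gives $\log(1+2\beta\abs a)\ge \frac{2\beta\abs a}{1+2\beta\abs a}$, and hence $f(\beta)\le \abs a-\frac{\abs a}{1+2\beta\abs a}$. A direct check is cleaner here: $g(s)=-\frac1s\log(1-sa)$ is convex in $a$ with $g(0)=0$, and at $s=1$ one has $\frac{\ud}{\ud s}[\,s^{-1}\log(1+s\abs a)\,]\le0$. So $s^{-1}\log(1+s\abs a)$ is decreasing in $s$. Since $\beta<1/2$ means $s=2\beta<1$, we get $s^{-1}\log(1+s\abs a)\ge\log(1+\abs a)$, i.e. $-\frac{1}{2\beta}\log(1-2\beta a)\le-\log(1-a)=-\log\lambda$. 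Hence $f(\beta)\le-a-\log\lambda=\lambda-1-\log\lambda$, the same bound as in the first case.

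When some $\lambda_i=0$, the right side is $+\infty$ by convention and there is nothing to show. Summing the one-dimensional bounds over $i$ gives \eqref{eq:Lambda-bound}.

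The main subtle step is the centering in the first part. Without it the mean term $m\cdot Km$ spoils the bound when $K$ has negative eigenvalues, so I rely on the translation identity $\Ent_\kappa(h)=\Ent_\kappa(\tilde h)+\abs m^2/2$, which is verified by a direct computation of $\log(\ud\tilde h/\ud\kappa)$. The rest is the elementary monotonicity of $\beta\mapsto\beta^{-1}\log(1\mp2\beta c)$ in the two sign cases.
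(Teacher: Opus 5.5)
Your argument is correct and is essentially the paper's proof: you center $h$ using $\Ent_\kappa(h)=\Ent_\kappa(\tilde h)+\abs{m}^2/2$, apply the variational formula with $\phi=\beta(z\otimes z-I_d):K$, and then diagonalize to reduce \eqref{eq:Lambda-bound} to the scalar inequality $-\frac{1}{2\beta}\log(1-2\beta a)\le-\log(1-a)$. That scalar inequality is just $\lambda^{2\beta}\le 1+2\beta(\lambda-1)$, the tangent-line bound at $s=1$ for the concave map $s\mapsto s^{2\beta}$, which is how the paper handles both signs of $a$ at once. Your case split also works, but in the case $a<0$ you must check that $s\mapsto s^{-1}\log(1+s\abs{a})$ is nonincreasing on all of $(0,1)$, not only at $s=1$; this follows from $\log(1+x)\ge x/(1+x)$, the inequality you wrote down first.
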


\begin{proof}
Let $Y$ have law $h\kappa$ and let $W=Y-m$.  If $\bar h$ is the density of $W$
with respect to $\kappa$, then
\[
        \Ent_\kappa(\bar h)=\Ent_\kappa(h)-\frac{\abs{m}^2}{2}\le\Ent_\kappa(h).
\]
The entropy variational formula \eqref{eq:entropyvar}, applied to $\bar h$ and
$\beta((z\otimes z-I):K)$ under $\kappa$, yields \eqref{eq:cov-dual}; the
condition $K\le I_d$ and $\beta<1/2$ ensure that the Gaussian exponential moment
is finite.

Diagonalizing $K$ gives
\[
        \Lambda_\beta(K)
        =-\beta\tr K-\frac{1}{2}\log\det(I_d-2\beta K).
\]
If the eigenvalues of $K$ are $1-\lambda_i$, then
\[
        \Lambda_\beta(K)=\sum_i\left[\beta(\lambda_i-1)
        -\frac{1}{2}\log\bigl(1+2\beta(\lambda_i-1)\bigr)\right].
\]
Since $0<2\beta<1$, concavity of $s\mapsto s^{2\beta}$ gives
\[
        1+2\beta(\lambda-1)\ge \lambda^{2\beta},
        \qquad \lambda\ge0.
\]
This proves \eqref{eq:Lambda-bound}.
\end{proof}

\begin{lemma}[Localized covariance bound]\label{lem:localized-covariance}
Let $0<\beta<1/2$.  Let $G=G^T\ge0$, $0\le\chi\le1$, and
$K_\chi=\chi(I_d-G)$.  If $h$ has covariance $\Sigma$, then
\begin{equation}\label{eq:localized-covariance}
        (\Sigma-I_d):\chi(I_d-G)
        \le \frac{1}{\beta}\Ent_\kappa(h)
        +\chi\bigl(\tr G-d-\log\det G\bigr),
\end{equation}
with the convention that the last term is $+\infty$ if $\det G=0$.
\end{lemma}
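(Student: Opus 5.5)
We will apply Lemma~\ref{lem:cov-dual} directly with $K=K_\chi=\chi(I_d-G)$, after handling the trivial cases. First, if $\det G=0$ and $\chi>0$, the right-hand side of \eqref{eq:localized-covariance} is $+\infty$ and there is nothing to prove. If $\chi=0$, then $K_\chi=0$ and the left-hand side vanishes, while the right-hand side is $\beta^{-1}\Ent_\kappa(h)\ge0$ (interpreting $0\cdot\infty=0$ for the last term, consistent with the convention). So we may assume $0<\chi\le1$ and $G>0$.

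Next we check that $K_\chi$ satisfies the hypothesis $K_\chi=K_\chi^T\le I_d$ of Lemma~\ref{lem:cov-dual}. Symmetry is clear. Diagonalize $G$ with eigenvalues $g_i>0$ in an orthonormal basis; then $K_\chi$ has eigenvalues $\chi(1-g_i)\le\chi\le1$, since $g_i\ge0$. Writing these as $1-\lambda_i$ gives
\[
        \lambda_i=1-\chi+\chi g_i\ge0 .
\]
Lemma~\ref{lem:cov-dual} then yields
\[
        (\Sigma-I_d):K_\chi\le\frac1\beta\Ent_\kappa(h)+\sum_i\bigl(\lambda_i-1-\log\lambda_i\bigr).
\]

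It remains to compare $\sum_i(\lambda_i-1-\log\lambda_i)$ with $\chi(\tr G-d-\log\det G)=\chi\sum_i(g_i-1-\log g_i)$. Put $\phi(s)=s-1-\log s$, which is convex on $(0,\infty)$ with $\phi(1)=0$. Since $\lambda_i=(1-\chi)\cdot1+\chi g_i$ is a convex combination, convexity gives
\[
        \phi(\lambda_i)\le(1-\chi)\phi(1)+\chi\phi(g_i)=\chi\phi(g_i).
\]
Summing over $i$ gives exactly the claimed bound. The only step requiring any care is recognizing this convex-combination structure of the eigenvalues so that the localization factor $\chi$ can be pulled outside the logarithmic term; the rest is a direct substitution into Lemma~\ref{lem:cov-dual}.
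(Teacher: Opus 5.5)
Your proposal is correct and follows the paper's proof: apply Lemma~\ref{lem:cov-dual} to $K_\chi$, write its eigenvalues as $1-\widetilde\lambda_i$ with $\widetilde\lambda_i=(1-\chi)+\chi\lambda_i$, and use convexity of $s\mapsto s-1-\log s$ together with its vanishing at $s=1$. Your separate handling of the degenerate cases ($\det G=0$ with $\chi>0$ makes the right side infinite; $\chi=0$ is trivial) is slightly cleaner than the paper's brief appeal to monotone limiting.
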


\begin{proof}
The eigenvalues of $K_\chi$ are $1-\widetilde\lambda_i$, where
\[
        \widetilde\lambda_i=(1-\chi)+\chi\lambda_i,
\]
and $\lambda_i$ are the eigenvalues of $G$.  Applying Lemma~\ref{lem:cov-dual} to $K_\chi$ gives the entropy term plus
\[
        \sum_i\bigl(\widetilde\lambda_i-1-\log\widetilde\lambda_i\bigr).
\]
The function $f(s)=s-1-\log s$ is convex on $(0,\infty)$ and $f(1)=0$, hence
\[
        f\bigl((1-\chi)\cdot1+\chi\lambda_i\bigr)\le\chi f(\lambda_i).
\]
Summing over $i$ gives \eqref{eq:localized-covariance}; the singular case follows by monotone limiting.
\end{proof}

\begin{lemma}[Stress bound]\label{lem:stress}
Assume that $U$ is convex and satisfies Assumption~\ref{ass:main}.  Let $q$ be a positive $C^\infty$ probability density with respect to $\mu_x$,
with $\Ent_x(q)<\infty$.  Let $T=T_q=\nabla\varphi$ be the Brenier map from
$q\mu_x$ to $\mu_x$, and write $\xi=x-T$.  Let
$g(x,v)=q(x)h_x(v)$ be a probability density with respect to
$\mu_x\otimes\kappa$ such that the conditional mean
\[
        m(x)=\int v h_x(v)\,\kappa(\ud v)
\]
and the conditional covariance matrix
\[
        \Sigma(x)=\int (v-m(x))\otimes(v-m(x))h_x(v)\,\kappa(\ud v)
\]
exist for a.e.\ $x$, and set $\Theta=q(\Sigma-I_d)$.  Assume that $\Theta\in C^1_{\rm loc}$
as a matrix field.  For $\chi\in C_c^\infty(\RR^d)$ define the localized pairings by the integrals
\begin{align}
        \mathfrak A_\chi(q)&=\int \chi\nabla q\cdot\xi\ud\mu_x,
                \label{eq:localized-A-pairing}\\
        \mathfrak S_\chi(q,\Theta)&=
        \int \xi\cdot\nabla_x^*(\chi\Theta)\ud\mu_x.
                \label{eq:localized-stress-pairing}
\end{align}
	Assume that the following limits exist for a standard cutoff sequence
	$\chi_R$:
\begin{align}
        \mathfrak A(q)
        &:=\lim_{R\to\infty}\mathfrak A_{\chi_R}(q),                  \label{eq:stress-cutoff-A}\\
        \mathfrak S(q,\Theta)&:=\lim_{R\to\infty}\mathfrak S_{\chi_R}(q,\Theta),  \label{eq:stress-cutoff-S}\\
       0 &= \lim_{R\to\infty}\int q\nabla\chi_R\cdot\xi\ud\mu_x.      \label{eq:stress-cutoff-boundary}
\end{align}
Then, for every
$0<\beta<1/2$,
\begin{equation}\label{eq:combined-stress-bound}
        -\mathfrak A(q)+\mathfrak S(q,\Theta)
        \le \frac{1}{\beta}\Ent_v(g)-\Ent_x(q).
\end{equation}
\end{lemma}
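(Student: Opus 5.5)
The plan is to work first with the localized pairings for a fixed cutoff $\chi\in C_c^\infty$, $0\le\chi\le1$, and integrate by parts to move all derivatives onto $\xi=x-\nabla\varphi$. We then use the structure of $\nabla\xi=I_d-D^2\varphi$, and pass to the limit $R\to\infty$ only at the end, using \eqref{eq:stress-cutoff-A}--\eqref{eq:stress-cutoff-boundary}.

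Since $\varphi$ is convex, $\xi\in BV_{\rm loc}$. Its distributional Hessian splits as $D^2\varphi=G\,\ud x+D^2_s\varphi$, where $G=D^2_A\varphi\ge0$ is the Alexandrov Hessian and $D^2_s\varphi$ is a nonnegative matrix-valued singular measure. Because $\chi\Theta$ and $\chi q r$ are $C^1$ with compact support, integration by parts against the $BV$ field $\xi$ is legitimate. With $r\propto\e^{-U}$ the Lebesgue density of $\mu_x$, this gives
\[
        \mathfrak S_\chi(q,\Theta)=\int \chi r\,\Theta:(I_d-G)\ud x-\int \chi r\,\Theta:\ud D^2_s\varphi ,
\]
\[
        -\mathfrak A_\chi(q)=\int \chi q\,r(d-\tr G)\ud x-\int\chi qr\,\tr(\ud D^2_s\varphi)
        +\int q\nabla\chi\cdot\xi\ud\mu_x-\int\chi q\nabla U\cdot\xi\ud\mu_x .
\]
Here $-\mathfrak A_\chi$ is computed as $-\int q\,\nabla_x^*(\chi\xi)\ud\mu_x$, which expands into the four terms shown.

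\emph{Singular parts.} Since $\Sigma\ge0$, we have $\Theta\ge-qI_d$. Hence $-\Theta:D^2_s\varphi\le q\tr D^2_s\varphi$ as measures, and the two singular contributions sum to something $\le0$. We drop them.

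\emph{Absolutely continuous part.} Apply Lemma~\ref{lem:localized-covariance} pointwise with $h=h_x$ and $\Theta=q(\Sigma-I_d)$:
\[
        \chi q(\Sigma-I_d):(I_d-G)\le \tfrac1\beta q\Ent_\kappa(h_x)+\chi q(\tr G-d-\log\det G).
\]
The $\tr G-d$ terms cancel with $\chi q(d-\tr G)$. We are left with
\[
        -\mathfrak A_\chi+\mathfrak S_\chi\le \tfrac1\beta\Ent_v(g)+\int q\nabla\chi\cdot\xi\ud\mu_x+\int\chi q\bigl[-\log\det G-\nabla U\cdot(x-T)\bigr]\ud\mu_x .
\]
The same inequality also covers $\det G=0$ on a null set.

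\emph{Key step.} Next use McCann's Monge--Ampère equation, valid $q\mu_x$-a.e. (here everywhere, since $q>0$):
\[
        q(x)\e^{-U(x)}=\e^{-U(T(x))}\det G(x),
        \qquad\text{so}\qquad
        -\log\det G=-\log q+U(x)-U(T).
\]
Convexity of $U$ gives $U(T)\ge U(x)+\nabla U(x)\cdot(T-x)$. Therefore the bracket is at most $-\log q$, and
\[
        -\mathfrak A_\chi+\mathfrak S_\chi\le\tfrac1\beta\Ent_v(g)-\int\chi q\log q\ud\mu_x+\int q\nabla\chi\cdot\xi\ud\mu_x.
\]
Now take $\chi=\chi_R$ and let $R\to\infty$. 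The left side converges by \eqref{eq:stress-cutoff-A}--\eqref{eq:stress-cutoff-S}, and the boundary term vanishes by \eqref{eq:stress-cutoff-boundary}. Since $q\log q\in L^1(\mu_x)$ (finite entropy; its negative part is bounded by $\e^{-1}$), dominated convergence gives $\int\chi_Rq\log q\ud\mu_x\to\Ent_x(q)$. This yields \eqref{eq:combined-stress-bound}.

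The main obstacle is the rigorous handling of the singular part of $D^2\varphi$ and of the integrations by parts. Concretely, one must justify the $BV$ Gauss--Green formula for $\xi$ against the $C^1_c$ fields $\chi\Theta r$ and $\chi q r$. One must also check that the Alexandrov Hessian is exactly the density appearing in the Monge--Ampère equation, which is McCann's theorem, applicable since $q\mu_x\ll\ud x$. Finally, the pointwise ordering $-\Theta:D_s^2\varphi\le q\tr D^2_s\varphi$ has to be read as an inequality between measures, which uses the continuity of $q$ and $\Theta$ so that they can be integrated against $D^2_s\varphi$. I would first establish these facts on compact sets where $\chi$ is supported, and only then pass to the cutoff limit.
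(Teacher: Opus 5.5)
Your proposal is correct and follows essentially the same route as the paper. Both arguments integrate the localized pairings by parts against $D^2\varphi=G\,\ud x+D^2_s\varphi$, discard the singular part using $q\Sigma\ge0$, apply Lemma~\ref{lem:localized-covariance} to the absolutely continuous part, use the Monge--Amp\`ere identity together with convexity of $U$, and then remove the cutoff. The differences are minor: the paper does the same bookkeeping by grouping terms into $\mathfrak B_\chi-\mathfrak A_\chi$ rather than cancelling them directly, and the Monge--Amp\`ere identity holds Lebesgue-a.e. (since $q>0$), not everywhere.
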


\begin{proof}
We use the standard BV facts for gradients of convex functions and the
Alexandrov Monge--Amp\`ere identity for Brenier maps; see
\cite{AFP} for BV integration by parts, \cite{EvansGariepy} for Alexandrov
second derivatives and Hessian measures of convex functions, and
\cite{VillaniTopics} for Brenier maps and the Alexandrov change-of-variables
formula.  Since
$T=\nabla\varphi$ with $\varphi$ convex, its
distributional derivative is the positive semidefinite matrix-valued Radon
measure
\[
        DT=D^2\varphi=G(x)\ud x+D^sT,
        \qquad
        D^sT=N\,\sigma,
\]
where $G(x)$ is the Alexandrov derivative and $N(x)$ is symmetric positive
semidefinite for $\sigma$-a.e.\ $x$.  The Alexandrov Monge--Amp\`ere identity
for $T_\#(q\mu_x)=\mu_x$ is
\begin{equation}\label{eq:MA}
        q(x)r(x)=r(T(x))\det G(x)
        \qquad q\mu_x\text{-a.e.}
\end{equation}
Since $q>0$ and $r>0$, this identity gives $\det G>0$ for $q\mu_x$-a.e.\ $x$ and
therefore
\begin{equation}\label{eq:logq}
        \log q(x)=U(x)-U(T(x))+\log\det G(x)
        \qquad q\mu_x\text{-a.e.}
\end{equation}

Fix $\chi\in C_c^\infty(\RR^d)$, $0\le\chi\le1$.  The compactly supported field
$r\chi\Theta$ is $C^1$, while $\xi=x-T$ is locally BV\@.  Since
$(\nabla_x^*(\chi\Theta))r=-\divop(r\chi\Theta)$ row by row, the BV
integration-by-parts formula gives
\begin{equation}\label{eq:BV-ibp-local}
        \mathfrak S_\chi(q,\Theta)
        =\int \chi\Theta:(I_d-G)\ud\mu_x
        -\int r\chi\Theta:N\ud\sigma.
\end{equation}
For the absolutely continuous part, apply Lemma~\ref{lem:localized-covariance}
pointwise with $\Sigma=I_d+\Theta/q$:
\begin{equation}\label{eq:ac-stress-local}
        \chi\Theta:(I_d-G)
        \le \frac{q}{\beta}\Ent_\kappa(h_x)
        +\chi q\bigl(\tr G-d-\log\det G\bigr)
        \qquad\text{for a.e.\ }x.
\end{equation}
For the singular part, $I_d+\Theta/q=\Sigma\ge0$ and $N\ge0$ imply
\begin{equation}\label{eq:singular-stress-local}
        -r\chi\Theta:N
        =r\chi q\bigl(I_d-\Sigma\bigr):N
        \le r\chi q\tr N.
\end{equation}
Combining \eqref{eq:BV-ibp-local}--\eqref{eq:singular-stress-local} yields
\begin{equation}\label{eq:stress-local-intermediate}
        \mathfrak S_\chi(q,\Theta)
        \le \frac{1}{\beta}\Ent_v(g)+\mathfrak B_\chi(q),
\end{equation}
where
\begin{equation}\label{eq:B-chi}
        \mathfrak B_\chi(q):=\int \chi q\bigl(\tr G-d-\log\det G\bigr)\ud\mu_x
        +\int r\chi q\tr N\ud\sigma.
\end{equation}
The estimate is localized.  On $\supp\chi$, the smooth density $q$ and the
weight $r$ are bounded above and below, while $D^2\varphi$ is a finite
matrix-valued Radon measure.  The singular trace term is therefore finite on
$\supp\chi$.  The term $-\log\det G$ is locally integrable in the extended sense
because \eqref{eq:logq} rewrites it as $-\log q+U-U\circ T$.  Here $q$ is bounded above and below and $U$ is bounded
on $\supp\chi$, and the only potentially nonlocal term is harmless: since
$0\le\chi\le1$, $U\ge0$, and
$T_\#(q\mu_x)=\mu_x$,
\[
        \int U(T)\chi q\ud\mu_x
        \le \int U(T)q\ud\mu_x
        =\int U\ud\mu_x<\infty,
\]
the last finiteness following from the polynomial upper growth of $U$ in
\eqref{eq:U-derivative-growth} and the confining lower bound in
\eqref{eq:U-confining-growth}.  Thus the localized quantities are well-defined.

Using $r\nabla q=\nabla(qr)+qr\nabla U$ and applying the scalar BV
integration-by-parts formula to $\xi=x-T$ gives
\begin{equation}\label{eq:A-chi-relaxed}
        \mathfrak A_\chi(q)=\int \chi q\bigl(\tr G-d+\nabla U(x)\cdot(x-T(x))\bigr)
        \ud\mu_x
        +\int r\chi q\tr N\ud\sigma
        -\int q\nabla\chi\cdot\xi\ud\mu_x.
\end{equation}
Let $\Ent_\chi(q)=\int \chi q\log q\ud\mu_x$.  Subtracting
\eqref{eq:A-chi-relaxed} from \eqref{eq:B-chi} and using \eqref{eq:logq} gives
\begin{equation}\label{eq:B-minus-A-local}
\begin{aligned}
        \mathfrak B_\chi(q)-\mathfrak A_\chi(q)
        &=-\Ent_\chi(q)
          -\int \chi q\Bigl[\nabla U(x)\cdot(x-T(x))-U(x)+U(T(x))\Bigr]
            \ud\mu_x                                      \\
        &\qquad +\int q\nabla\chi\cdot\xi\ud\mu_x.
\end{aligned}
\end{equation}
Convexity of $U$ gives
\[
        \nabla U(x)\cdot(x-T(x))-U(x)+U(T(x))\ge0.
\]
Therefore \eqref{eq:stress-local-intermediate} and \eqref{eq:B-minus-A-local}
combine to the localized inequality
\begin{equation}\label{eq:combined-local}
        \mathfrak S_\chi(q,\Theta)-\mathfrak A_\chi(q)
        \le \frac{1}{\beta}\Ent_v(g)-\Ent_\chi(q)
        +\int q\nabla\chi\cdot\xi\ud\mu_x.
\end{equation}
Choose the cutoff sequence $\chi_R$.  The assumptions
\eqref{eq:stress-cutoff-A}--\eqref{eq:stress-cutoff-boundary} give convergence
of the two localized left-hand terms and of the boundary term.  Since
$\Ent_x(q)<\infty$, the positive part of $q\log q$ is integrable and the negative
part is bounded by $\e^{-1}$; hence $\Ent_{\chi_R}(q)\to \Ent_x(q)$ by dominated
convergence applied separately to the positive and negative parts.  Letting
$R\to\infty$ in \eqref{eq:combined-local} proves
\eqref{eq:combined-stress-bound}.
\end{proof}

\section{Entropy dissipation estimate}\label{sec:closure}

For a regular solution, Lemma~\ref{lem:stress} applied at time $t$ with
$\Theta=\Theta_t$ and the cutoff convention in Definition~\ref{def:regular}\,\ref{reg:cutoffs} identifies
$\mathfrak A(q_t)$ and $\mathfrak S(q_t,\Theta_t)$ with the two spatial terms
in \eqref{eq:C-raw}.  We now
take $\beta=\tfrac{1}{4}$ in \eqref{eq:combined-stress-bound}.  Combining
\eqref{eq:C-raw} with Lemma~\ref{lem:stress} gives, in the sense of
distributions in time,
\begin{equation}\label{eq:C-clean}
        \frac{\ud}{\ud t}\mathcal C_{\rm OT}(g_t)
        \le -\Ent_x(q_t)-\gamma\mathcal C_{\rm OT}(g_t)
        +J(g_t)+4\Ent_v(g_t).
\end{equation}
By Lemma~\ref{lem:current} and \eqref{eq:Ev-Iv},
\begin{equation}\label{eq:C-final}
        \frac{\ud}{\ud t}\mathcal C_{\rm OT}(g_t)
        \le -\Ent_x(q_t)-\gamma\mathcal C_{\rm OT}(g_t)+3I_v(g_t).
\end{equation}
Together with \eqref{eq:entropy-diss}, this yields
\begin{equation}\label{eq:H-raw}
        -\frac{\ud}{\ud t}\mathcal H_\eps(g_t)
        \ge (\gamma-3\eps)I_v(g_t)+\eps \Ent_x(q_t)
        +\eps\gamma\mathcal C_{\rm OT}(g_t).
\end{equation}
The last term is controlled by \eqref{eq:C-I-Ex}:
\begin{equation}\label{eq:H-cross}
        \eps\gamma\mathcal C_{\rm OT}(g_t)
        \ge -\eps\gamma\sqrt{\frac{2}{\rho}I_v(g_t)\Ent_x(q_t)}.
\end{equation}
Using $\gamma=\Gamma\sqrt\rho$ and $\eps=\theta\sqrt\rho$, \eqref{eq:H-raw}--\eqref{eq:H-cross} become
\begin{equation}\label{eq:quadratic}
        -\frac{\ud}{\ud t}\mathcal H_\eps(g_t)
        \ge \sqrt\rho\Bigl[(\Gamma-3\theta)I_v+
        \theta \Ent_x-\theta\Gamma\sqrt{2I_v\Ent_x}\Bigr].
\end{equation}
The choice of $\theta$ in \eqref{eq:theta} implies
\begin{equation}\label{eq:theta-uses-1}
        \Gamma-3\theta\ge \frac{3\Gamma}{4},
\end{equation}
and, by Young's inequality together with $\theta\Gamma\le1/4$,
\begin{equation}\label{eq:young-cross}
        \theta\Gamma\sqrt{2I_v\Ent_x}
        \le \frac{\Gamma}{4}I_v+\frac{\theta}{2}\Ent_x.
\end{equation}
Therefore
\begin{equation}\label{eq:H-good}
        -\frac{\ud}{\ud t}\mathcal H_\eps(g_t)
        \ge \sqrt\rho\left(\frac{\Gamma}{2}I_v(g_t)+\frac{\theta}{2}\Ent_x(q_t)\right).
\end{equation}
Since $\Ent(g_t)=\Ent_x(q_t)+\Ent_v(g_t)\le \Ent_x(q_t)+I_v(g_t)/2$ and
$\theta/2\le\Gamma$ by \eqref{eq:theta},
\begin{equation}\label{eq:controls-E}
        \frac{\Gamma}{2}I_v+\frac{\theta}{2}\Ent_x
        \ge \frac{\theta}{2}\Ent(g_t).
\end{equation}
Finally $\mathcal H_\eps\le(1+\theta)\Ent$, so \eqref{eq:H-good} and \eqref{eq:controls-E} give
\[
        \frac{\ud}{\ud t}\mathcal H_\eps(g_t)
        \le -\frac{\theta}{2(1+\theta)}\sqrt\rho\,\mathcal H_\eps(g_t).
\]
This is \eqref{eq:H-diff}, and therefore proves the differential inequality in Proposition~\ref{prop:corrector} for regular solutions.


\section{Removal of regularity assumptions}\label{sec:remove-regularity}

This section removes the regularity assumptions by approximation, extending the
estimates from regular solutions to arbitrary finite-entropy data.
We rely on the positive-time weighted smoothing theorem of H\'erau--Nier
\cite{HerauNier}.  We first put their operator in the normalization used here.
Let
\[
        \varrho_\infty(x,v)=Z_x^{-1}(2\pi)^{-d/2}
        \exp\{-U(x)-\abs{v}^2/2\}
\]
be the Lebesgue density of $\mu$.  Consider the Lebesgue-density equation
\begin{equation}\label{eq:HN-lebesgue-equation}
        \partial_t p_t+v\cdot\nabla_xp_t-\nabla U\cdot\nabla_vp_t
        -\gamma\nabla_v\cdot(\nabla_vp_t+vp_t)=0.
\end{equation}
If $p_t$ solves \eqref{eq:HN-lebesgue-equation} and
$f_t=\varrho_\infty^{-1/2}p_t$, then a direct conjugation gives
\begin{equation}\label{eq:HN-conjugated-equation}
        \partial_t f_t+\mc{K}_\gamma f_t=0,
\end{equation}
where
\begin{equation}\label{eq:HN-K-present}
        \mc{K}_\gamma
        =v\cdot\nabla_x-\nabla U\cdot\nabla_v
        +\gamma\sum_{j=1}^d
        \left(-\partial_{v_j}+\frac{v_j}{2}\right)
        \left(\partial_{v_j}+\frac{v_j}{2}\right).
\end{equation}
Indeed, since
$\nabla_v \varrho_\infty^{1/2}=-(v/2)\varrho_\infty^{1/2}$,
\[
        (\nabla_v+v)(\varrho_\infty^{1/2}f)
        =\varrho_\infty^{1/2}(\nabla_v+v/2)f,
\]
and therefore
\[
        -\varrho_\infty^{-1/2}\nabla_v\cdot
        \bigl((\nabla_v+v)(\varrho_\infty^{1/2}f)\bigr)
        =\sum_{j=1}^d\left(-\partial_{v_j}+\frac{v_j}{2}\right)
        \left(\partial_{v_j}+\frac{v_j}{2}\right)f.
\]
For the Hamiltonian part,
\[
        \varrho_\infty^{-1/2}
        (v\cdot\nabla_x-\nabla U\cdot\nabla_v)(\varrho_\infty^{1/2}f)
        =(v\cdot\nabla_x-\nabla U\cdot\nabla_v)f,
\]
because
$v\cdot\nabla_x(U+\abs{v}^2/2)-\nabla U\cdot\nabla_v(U+\abs{v}^2/2)=0$.

\begin{lemma}[H\'erau--Nier smoothing]\label{lem:HN-input}
Assume Assumption~\ref{ass:main}\textup{\ref{ass:hn-upper},\ref{ass:hn-lower}}.
For every $f_0\in\mathcal S'(\RR^{2d})$, the weak Cauchy problem
\eqref{eq:HN-conjugated-equation} with initial datum $f_0$ admits the unique
semigroup solution
\[
        f_t=\e^{-t\mc{K}_\gamma}f_0\in\mathcal S(\RR^{2d}),\qquad t>0.
\]
Moreover $t\mapsto f_t$ belongs to $C^\infty((0,\infty);\mathcal S)$, and for
every compact interval $I_0\subset(0,\infty)$ and every $N,k\ge0$,
\begin{equation}\label{eq:HN-uniform-Schwartz}
        \sup_{t\in I_0}\sum_{\abs{\alpha}+\abs{\eta}\le k}
        \sup_{x,v}(1+\abs{x}+\abs{v})^N
        \abs{\partial_x^\alpha\partial_v^\eta f_t(x,v)}<\infty.
\end{equation}
\end{lemma}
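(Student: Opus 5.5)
The plan is to treat Lemma~\ref{lem:HN-input} as a translation of the Hérau--Nier smoothing theorem \cite{HerauNier} into the present normalization, rather than to reprove hypoelliptic smoothing. The operator $\mc{K}_\gamma$ in \eqref{eq:HN-K-present} has already been identified as the conjugate of the Fokker--Planck operator by $\varrho_\infty^{1/2}$. Hérau--Nier consider $K=v\cdot\nabla_x-\nabla_xV\cdot\nabla_v+\frac{\gamma_0}{2}(-\Delta_v+\frac{|v|^2}{4}-\frac d2)$, possibly with a different scaling of friction and temperature. So the first step is to exhibit an explicit linear change of variables that maps $\mc{K}_\gamma$ exactly onto their operator (up to a positive multiple), with a rescaled potential $\widetilde V$. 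This uses a dilation $x\mapsto ax$, $v\mapsto bv$, together with a time rescaling $t\mapsto ct$. Under such a change:
\begin{itemize}
\item \ref{ass:hn-upper} and \ref{ass:hn-lower} for $U$ become Hypothesis~1 of \cite{HerauNier} for $\widetilde V$, as already asserted in the text before Assumption~\ref{ass:main}. The constants change, but the polynomial orders $2n$, $2n-1$, $2n-2$ are preserved.
\item The Schwartz class $\mathcal S(\RR^{2d})$ and the weighted seminorms in \eqref{eq:HN-uniform-Schwartz} are invariant up to constants.
\end{itemize}

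The second step is to import the conclusions of \cite{HerauNier}. Under Hypothesis~1, the closure of $K$ from $C_c^\infty$ is maximal accretive in $L^2$. It generates a contraction semigroup that extends by duality to $\mathcal S'$. The main smoothing estimate then gives, for $t>0$, a bound of $\|\Lambda^{s}e^{-tK}f\|$ by $C t^{-N(s)}\|f\|$ for the anisotropic weight $\Lambda$ built from $\langle v\rangle$, $D_v$, $D_x$, $\langle\nabla V\rangle$ and $\langle x\rangle$. Since $|\nabla V|\gtrsim\langle x\rangle^{2n-1}$, powers of $\Lambda$ dominate the weights $(1+|x|+|v|)^N$ and all derivatives $\partial_x^\alpha\partial_v^\eta$. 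A Sobolev embedding then yields the sup-norm bound \eqref{eq:HN-uniform-Schwartz}, uniformly on compact $I_0\subset(0,\infty)$ because the constants depend only on $\inf I_0$.

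For a tempered initial datum the same argument applies with negative-order weights. Since $f_0\in\mathcal S'$, there is some $s_0$ with $\Lambda^{-s_0}f_0\in L^2$. The semigroup property and commutator estimates, available in \cite{HerauNier} because the weights are built from the pseudodifferential calculus, then give $f_t\in\mathcal S$. Smoothness in time, $C^\infty((0,\infty);\mathcal S)$, follows from $\partial_t^kf_t=(-\mc{K}_\gamma)^kf_t$ together with the fact that $\mc{K}_\gamma$ maps $\mathcal S$ continuously to $\mathcal S$, since its coefficients have polynomial growth by \ref{ass:hn-upper}.

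Uniqueness of the weak solution is the remaining step. I would prove it by duality. For a weak solution with zero data, pair it against $e^{-(t-s)\mc{K}_\gamma^*}\psi$ for $\psi\in\mathcal S$. The adjoint $\mc{K}_\gamma^*$ is an operator of the same type with $v\mapsto -v$, so it satisfies the same smoothing, and the pairing is constant in $s$ and hence zero.

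I expect the main obstacle to be the bookkeeping of the normalization in the first step: checking that friction $\gamma$, unit temperature, and the $|v|^2/4$ harmonic term match the form of \cite{HerauNier} exactly, including the $-\frac d2$ constant shift. The constant shift only multiplies the semigroup by $e^{ct}$ and is therefore harmless.
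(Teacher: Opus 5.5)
Your proposal is correct and follows essentially the same route as the paper: identify $\mc{K}_\gamma$ with H\'erau--Nier's conjugated operator, place $f_0$ in a negative-order space of their Sobolev scale, use their quantitative smoothing bounds together with weighted Sobolev embedding (uniformly on compact $I_0\subset(0,\infty)$), and get time regularity from $\partial_t^\ell f_t=(-\mc{K}_\gamma)^\ell f_t$. The differences are minor. The paper reads $\mc{K}_\gamma$ as exactly H\'erau--Nier's operator with $m=\beta=1$, so no dilation is needed. It cites their Corollary~3.3 and Theorem~4.2 for uniqueness and for $\mathcal S'\to\mathcal S$, where you give a duality argument using the velocity-reflected adjoint. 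It also notes that the equilibrium projection appearing in their bounds is a fixed Schwartz function.
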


\begin{proof}
The operator $\mc{K}_\gamma$ is exactly H\'erau--Nier's normalized
Fokker--Planck operator with $m=\beta=1$.  The growth hypotheses are their
Hypothesis~1 in the positive-potential case; compare
\eqref{eq:U-derivative-growth}--\eqref{eq:U-confining-growth} with
Hypothesis~1 of \cite{HerauNier}.  If $f_0\in\mathcal S'$, choose
$r\in\mathbb R$ with $f_0\in H^{r,r}$ in the isotropic Sobolev scale of
\cite{HerauNier}.  H\'erau--Nier's Theorem~0.1\textup{(1)} and Corollary~3.3
give the unique weak Cauchy semigroup solution in that space, and their
Theorem~4.2 gives
$\e^{-t\mc{K}_\gamma}:\mathcal S'\to\mathcal S$ for every $t>0$.  The estimate
\eqref{eq:HN-uniform-Schwartz} follows from the quantitative
$H^{s,s}$-bounds in Theorem~0.1\textup{(2)} of \cite{HerauNier}, applied
with an order larger than both the desired output order and the distribution
order of $f_0$, together with weighted Sobolev embedding.  In the
positive-potential case the possible equilibrium projection is a fixed Schwartz
function.  The explicit polynomial factors in that theorem are bounded when
$t$ stays in $I_0\subset(0,\infty)$.  Since
$\mc{K}_\gamma$ has $C^\infty$ polynomially bounded coefficients and maps
$\mathcal S$ continuously into itself, \eqref{eq:HN-conjugated-equation} also
gives $C^\infty$ time-dependence with values in $\mathcal S$.
\end{proof}

Under Assumption~\ref{ass:main}, the operator in \eqref{eq:forward},
$-\mathcal L_a+\gamma\mathcal L_s$, generates a
positivity-preserving, mass-preserving, $L^1(\mu)$-strongly continuous Markov
semigroup $(\mc{P}_t)_{t\ge0}$ on density ratios.  Equivalently, it is the
$L^2(\mu)$-adjoint of the kinetic Langevin backward semigroup with generator
$\mathcal L_a+\gamma\mathcal L_s$.  The measure $\mu$ is invariant,
and for every convex $\Phi:[0,\infty)\to\RR\cup\{+\infty\}$ with $\Phi(0)=0$,
\begin{equation}\label{eq:Phi-contraction}
        \int \Phi(\mc{P}_tf)\ud\mu\le \int \Phi(f)\ud\mu
\end{equation}
whenever $f\ge0$ and the right-hand side is finite.  In particular,
\begin{equation}\label{eq:entropy-contraction}
        \Ent_\mu(\mc{P}_t f)\le \Ent_\mu(f),
        \qquad t\ge0.
\end{equation}
We now use Lemma~\ref{lem:HN-input} to establish the short-time smoothing
property of the semigroup.

\begin{proposition}\label{prop:semigroup-facts}
Fix a positive floor $\zeta \in (0,1)$ and set
\begin{equation}\label{eq:admissible-approx-form}
        f=\zeta+(1-\zeta)h,
\end{equation}
where $h\ge0$, $\int h\ud\mu=1$, and $h$ is a bounded $C^\infty$ density ratio
with compact support in $(x,v)$.  Then for every $\delta>0$ the shifted solution
$s\mapsto \mc{P}_{s+\delta}f$ is regular on each interval $[0,T]$ in the sense
of Definition~\ref{def:regular}.
\end{proposition}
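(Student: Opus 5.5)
We will check the four items (R1)--(R4) of Definition~\ref{def:regular} for $g_s=\mc{P}_{s+\delta}f$, $s\in[0,T]$, all by reduction to Lemma~\ref{lem:HN-input} on the time window $I_0=[\delta,T+\delta]\subset(0,\infty)$.

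\textbf{Smoothness, floor, and (R1).} Since $\mc{P}_t$ preserves mass and constants, we have $\mc{P}_tf=\zeta+(1-\zeta)\mc{P}_th\ge\zeta$, so (R1) holds with $c_I=\zeta$. Write $\mc{P}_th=\varrho_\infty^{-1/2}f_t$, where $f_t=\e^{-t\mc{K}_\gamma}(\varrho_\infty^{1/2}h)$. The initial datum $\varrho_\infty^{1/2}h$ is bounded and compactly supported, hence lies in $\mathcal S'$. (We take for granted that the Markov semigroup coincides with the H\'erau--Nier semigroup on such data; both are weak solutions of the same Cauchy problem, and H\'erau--Nier give uniqueness.) Lemma~\ref{lem:HN-input} then says that $f_t$ is Schwartz, uniformly for $t\in I_0$ together with all derivatives, and smooth in $t$. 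Hence $\mc{P}_th=\varrho_\infty^{-1/2}f_t$ satisfies
\[
\abs{\partial^\alpha_x\partial^\eta_v \mc{P}_th}\le C_{N}\,\e^{U/2+\abs v^2/4}(1+\abs x+\abs v)^{-N}.
\]
This pointwise bound is \emph{not} enough by itself for integrability against $\mu$. The useful way to read it is that $p_t=\varrho_\infty^{1/2}f_t$, which is the Lebesgue density, is a Schwartz function times $\varrho_\infty^{1/2}$. Every integral that appears has the form $\int F(g)\,\ud\mu=\int F(p/\varrho_\infty)\varrho_\infty$. For example,
\[
I_v=\int\frac{\abs{\nabla_v p+vp}^2}{p}\,\ud x\,\ud v ,
\]
and since $p\ge\zeta\varrho_\infty$ we get $\abs{\nabla_vp+vp}^2/p\le\zeta^{-1}\varrho_\infty^{-1}\abs{\cdots}^2$. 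The numerator is $\varrho_\infty\times(\text{Schwartz})^2$, so the whole integrand is polynomially weighted Schwartz and integrable. Entropy finiteness follows the same way, using $g\log g\le g^2$. The entropy identity is then differentiation under the integral sign, with the $v$-integrations by parts justified by Schwartz decay of $f_t$ multiplied by polynomial factors such as $v$, $\nabla U$ and $\log g$. Here $\log g$ is bounded below by $\log\zeta$ and grows at most like $U+\abs v^2$ above, which is polynomial by (A1).

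\textbf{(R2) and (R3).} The moments $q,j,M$ are $\int p\,\{1,v,v\otimes v\}\,\ud v/r$, where $p=\varrho_\infty^{1/2}f$. They are smooth in $(t,x)$, and the moment equations follow by integrating \eqref{eq:forward} in $v$ with boundary terms vanishing. Conditional covariances are automatically positive semidefinite. Since $q\ge\zeta$, the velocity field $m=j/q$ is smooth. Moreover $\abs{m}\le\sqrt{2\Ent_\kappa(h_x)}$ fiberwise, and derivative bounds of polynomial type should follow from the representation $j\,r=\int v\,p\,\ud v$: its derivatives are bounded by $r^{1/2}\times(\text{Schwartz in }x)$, while $q\,r\ge\zeta r$. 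Dividing gives $\abs{\partial^\alpha m}\lesssim r^{-1/2}\times(\text{Schwartz})$. Combining this with the crude fiberwise bound, I expect $m$ and its first derivatives to be globally bounded by polynomials. This is the step I would check most carefully; if it fails, I would instead use a Lipschitz bound on compacts plus a linear-growth bound from the entropy. Given at most linear growth with locally Lipschitz dependence, the characteristic flow exists globally for all $\tau$, it transports $q_t\mu_x$ to $q_{t+s}\mu_x$ by uniqueness for the continuity equation, and Taylor expansion in $s$ gives \eqref{eq:flow-expansion}. The $L^2(q\mu_x)$ remainder is controlled by polynomial moments of $q\mu_x$, which are finite.

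\textbf{(R4), the main obstacle.} Polynomial bounds give $\nabla q,\nabla_x^*\Theta\in L^2(q^{-1}\mu_x)$ at once, because $q\ge\zeta$. The delicate part is the cutoff limits, since $\xi_t$ is only known to lie in $L^2(q\mu_x)$ through Talagrand's inequality. The plan is to bound
\[
\abs{\xi\cdot\nabla q}\le\zeta^{-1/2}\,\abs{\xi}\sqrt q\cdot\frac{\abs{\nabla q}}{\sqrt q},
\]
which is a product of two $L^2(\mu_x)$ functions. This gives $L^1$ integrability, and dominated convergence then yields \eqref{eq:cutoff-A-def}. For \eqref{eq:cutoff-stress-def}, write $\nabla^*_x(\chi_R\Theta)=\chi_R\nabla^*_x\Theta-\Theta\nabla\chi_R$. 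The first term is handled as above. The second is bounded by
\[
\frac{C}{R}\int\abs{\xi}\,\abs{\Theta}\,\ud\mu_x\le\frac{C}{R}\,\norm{\xi}_{L^2(q\mu_x)}\,\norm{\Theta/\sqrt q}_{L^2(\mu_x)}\to0 .
\]
The boundary term \eqref{eq:cutoff-boundary-def} is bounded by $(C/R)\norm{\xi}_{L^2(q\mu_x)}$ in the same way. Uniformity in $t$ follows because Talagrand's inequality together with the entropy contraction \eqref{eq:entropy-contraction} bounds $\norm{\xi_t}_{L^2(q_t\mu_x)}$ uniformly, and the remaining norms are uniform on $I_0$ by Lemma~\ref{lem:HN-input}.
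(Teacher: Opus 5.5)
The gap is in (R3), and more broadly in the ``polynomial bounds'' that your (R2)--(R4) rely on. For velocity moments, Lemma~\ref{lem:HN-input} gives $\partial_x^\alpha\Pi_v(b(v)\mc{P}_th)=\e^{U/2}S$, with $S$ Schwartz in $x$ uniformly on compact time intervals. Since $U\gtrsim\langle x\rangle^{2n}$, this is not a polynomial bound. Your division step therefore yields only $\abs{\nabla m_t}\lesssim\e^{U/2}\times(\text{Schwartz})$, so the expectation you flagged cannot be extracted from these tools.

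Your fallback (local Lipschitz plus linear growth) gives existence of the flow. It gives neither $a_t\in L^2(q_t\mu_x)$ locally uniformly nor the expansion \eqref{eq:flow-expansion} with an $o_{L^2(q_t\mu_x)}(s^2)$ remainder uniform in $t$. A Taylor remainder ``controlled by polynomial moments'' would need pointwise control of $\partial_ta_t$ and $\nabla a_t$. These contain terms such as $j_t(\partial_tq_t)^2/q_t^3$, of size $\e^{U}\times(\text{Schwartz})$, which the available bounds do not even place in $L^2(q_t\mu_x)$.

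The same problem affects your claim in (R4) that $\nabla_x^*\Theta_t\in L^2(q_t^{-1}\mu_x)$ holds ``at once''. The term $\nabla\Theta_t$ contains $\nabla j_t\otimes j_t/q_t$, which carries more than one factor $\e^{U/2}$ unless $j_t$ is known to be bounded.

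Two ingredients close this, and the paper uses both.
\begin{itemize}
\item \emph{The Markov bound $0\le\mc{P}_th\le\norm{h}_\infty$.} Together with the floor $q_t\ge\zeta$, it makes $q_t,j_t,M_t,\Theta_t$ and $m_t$ globally bounded. It is also how the paper places the Markov solution in H\'erau--Nier's uniqueness class, the point you took for granted, via $\int\abs{\mc{P}_th-\mc{P}_sh}^2\ud\mu\le2\norm{h}_\infty\norm{\mc{P}_th-\mc{P}_sh}_{L^1(\mu)}$.
\item \emph{Measuring first derivatives only in $L^2(\mu_x)$.} There $\abs{\e^{U/2}S}^2\ud\mu_x=Z_x^{-1}S^2\ud x$ cancels the exponential. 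Combined with boundedness, this gives $\nabla q_t,\nabla_x^*\Theta_t\in L^2(q_t^{-1}\mu_x)$ and $\partial_tm_t,\nabla m_t,a_t\in L^2(q_t\mu_x)$, all with tails uniformly small on compact time intervals.
\end{itemize}

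The expansion then needs no higher derivatives. From $X_s-x-sm_t=\int_0^s(s-\sigma)a_{t+\sigma}(X_\sigma)\ud\sigma$ and Minkowski's inequality, it suffices that $\norm{a_{t+\sigma}\circ X_\sigma-a_t}_{L^2(q_t\mu_x)}\to0$ locally uniformly in $t$. On a ball this follows from smoothness and from $X_\sigma\to\Id$ uniformly, which holds because $m$ is bounded. Off the ball, $\abs{X_\sigma(x)}\ge\abs{x}-C\abs{\sigma}$ and $(X_\sigma)_\#(q_t\mu_x)=q_{t+\sigma}\mu_x$ move the tail to $\int_{\{\abs{y}>R-C\abs{\sigma}\}}\abs{a_{t+\sigma}}^2q_{t+\sigma}\ud\mu_x$, which is uniformly small.

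With these in place, your (R1) and your cutoff argument for (R4) go through. Bounding $\norm{\xi_t}_{L^2(q_t\mu_x)}$ uniformly by Talagrand plus entropy contraction is a clean substitute for the paper's tail estimate on $\xi_t$. For this to work, the dominated-convergence terms must use uniformly small coefficient \emph{tails}, not merely uniform norms.
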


\begin{proof}
Since $\mc{P}_t1=1$,
\begin{equation}\label{eq:floor-preserved}
        \mc{P}_tf=\zeta+(1-\zeta)\mc{P}_th,
        \qquad
        \mc{P}_tf\ge\zeta.
\end{equation}
Let $\widetilde p_t=(\mc{P}_th)\varrho_\infty$ and
$\widetilde f_t=\varrho_\infty^{-1/2}\widetilde p_t$, thus $\widetilde f_t$ solves
\eqref{eq:HN-conjugated-equation} with initial datum
$\widetilde f_0=h\varrho_\infty^{1/2}\in C_c^\infty(\RR^{2d})\subset\mathcal S$.
The Markov solution lies in the weak Cauchy class used by H\'erau--Nier.  Indeed,
$0\le \mc{P}_th\le\|h\|_\infty$ and
\[
        \|\widetilde f_t-\widetilde f_s\|_{L^2(\ud x\ud v)}^2
        \le 2\|h\|_\infty
        \|\mc{P}_th-\mc{P}_sh\|_{L^1(\mu)},
\]
so $t\mapsto\widetilde f_t$ is continuous in $L^2$ by the $L^1(\mu)$ strong
continuity of $\mc{P}_t$.  By the uniqueness part of Lemma~\ref{lem:HN-input}, the
distributional solution obtained from the Markov semigroup coincides after
conjugation with $\e^{-t\mc{K}_\gamma}\widetilde f_0$.  Therefore
Lemma~\ref{lem:HN-input} gives, for every
compact interval $I_0\subset(0,\infty)$ and every $N,k\ge0$,
\begin{equation}\label{eq:HN-weighted-output}
        \sup_{t\in I_0}\sum_{\abs{\alpha}+\abs{\eta}\le k}
        \sup_{x,v\in\RR^d}(1+\abs{x}+\abs{v})^N
        \abs{\partial_x^\alpha\partial_v^\eta \widetilde f_t(x,v)}<\infty.
\end{equation}
The same estimate holds with $\widetilde f_t$ replaced by any time derivative
$\partial_t^\ell\widetilde f_t$, because
$\partial_t^\ell\widetilde f_t=(-\mc{K}_\gamma)^\ell\widetilde f_t$ and
$\mc{K}_\gamma$ maps $\mathcal S$ continuously into itself.

Since
$\widetilde g_t=\mc{P}_th=\widetilde p_t/\varrho_\infty=\varrho_\infty^{-1/2}\widetilde f_t$,
the product rule and \eqref{eq:U-derivative-growth} imply the following
consequence of \eqref{eq:HN-weighted-output}: for every compact interval
$I\subset(0,\infty)$ and all multi-indices $\alpha,\eta$,
\[
        \partial_x^\alpha\partial_v^\eta \widetilde g_t(x,v)
        =\varrho_\infty(x,v)^{-1/2}R_{\alpha,\eta,t}(x,v),
        \qquad t\in I,
\]
where the family $R_{\alpha,\eta,t}$ is bounded in the Schwartz topology,
uniformly for $t\in I$.  Therefore, for every polynomial $b(v)$,
\begin{equation}\label{eq:density-ratio-moment-derivatives}
        \partial_x^\alpha\,\Pi_v\bigl(b(v)\widetilde g_t\bigr)(x)
        =\e^{U(x)/2}S_{\alpha,b,t}(x),
\end{equation}
where $S_{\alpha,b,t}$ is a Schwartz function of $x$, again uniformly for
$t\in I$, and continuously in $t$ in every Schwartz seminorm.  Squaring
\eqref{eq:density-ratio-moment-derivatives} and multiplying by
$\ud\mu_x=Z_x^{-1}\e^{-U(x)}\ud x$ cancels the factor $\e^{U(x)}$.  Thus, for every
polynomial $b$, every multi-index $\alpha$, every compact interval
$I\subset(0,\infty)$, and $\ell=0,1$,
\begin{equation}\label{eq:moment-tail-master}
        \lim_{R\to\infty}\sup_{t\in I}
        \int_{\{\abs{x}>R\}}
        \left\lvert
        \partial_t^\ell\partial_x^\alpha\,\Pi_v\bigl(b(v)\widetilde g_t\bigr)(x)
        \right\rvert^2\ud\mu_x=0.
\end{equation}
For $\ell=1$ this follows by writing $\partial_t\widetilde g_t$ from the
kinetic equation; the coefficients that appear are polynomials in $v$ and
polynomially bounded functions of $x$ by \eqref{eq:U-derivative-growth}, and
multiplication by such coefficients preserves Schwartz decay of the functions
$S_{\alpha,b,t}$.

Because the floor in \eqref{eq:floor-preserved} is the invariant density $1$,
the moments decompose as
\begin{equation}\label{eq:floor-moments}
        q_t=\zeta+(1-\zeta)\widetilde q_t,
        \qquad
        j_t=(1-\zeta)\widetilde j_t,
        \qquad
        M_t=\zeta I_d+(1-\zeta)\widetilde M_t.
\end{equation}
Define
\[
        \Theta_t=M_t-\frac{j_t\otimes j_t}{q_t}-q_tI_d.
\]
The Markov $L^\infty$ contraction and Gaussian moment bounds give uniform
bounds on $q_t,j_t,M_t$ on compact positive time intervals.  Together with
$q_t\ge\zeta$, the algebraic formula above gives the same bound for $\Theta_t$.
Combining this boundedness, $q_t\ge\zeta$, and the tail estimate
\eqref{eq:moment-tail-master} gives
\[
        m_t=\frac{j_t}{q_t}\in L^\infty,\qquad
        \partial_t m_t,\ \nabla m_t,\
        a_t=\partial_t m_t+(m_t\cdot\nabla)m_t\in L^2(q_t\mu_x),
\]
and, for every compact interval $I\subset(0,\infty)$,
\begin{equation}\label{eq:coefficient-tail}
\begin{aligned}
        \lim_{R\to\infty}\sup_{t\in I}
        \int_{\{\abs{x}>R\}} &\Biggl(
        \frac{q_t^2+\abs{\nabla q_t}^2+\abs{j_t}^2+\abs{\Theta_t}^2
              +\abs{\nabla_x^*\Theta_t}^2}{q_t} \\
        &\quad +q_t\abs{m_t}^2+q_t\abs{\partial_t m_t}^2
        +q_t\abs{\nabla m_t}^2+q_t\abs{a_t}^2\Biggr)\ud\mu_x=0.
\end{aligned}
\end{equation}
Indeed, derivatives of $m_t=j_t/q_t$ are finite sums of products of bounded
factors and derivatives of $q_t$ and $j_t$ divided by powers of the positive
floor $q_t$; the same applies to $a_t$.  The tensor $\Theta_t$ and its first
spatial derivatives are algebraic combinations of $q_t,j_t,M_t$ and their first
derivatives, again divided only by the floor.  Finally
$\nabla_x^*\Theta_t=-\divop\Theta_t+(\nabla U)\cdot\Theta_t$, and the polynomial
growth of $\nabla U$ is absorbed by the Schwartz factors in
\eqref{eq:density-ratio-moment-derivatives}.  The moment equations follow by
multiplying the pointwise equation by $1$ and $v$ and integrating in $v$; the
estimates above justify the integrations by parts.

The entropy dissipation identity follows directly.  Because
$\mc{P}_tf\ge\zeta$ and $\mc{P}_tf$ is bounded above due to the maximum principle, $\log(\mc{P}_tf)$ is bounded on compact positive time intervals.  For
$\abs{\eta}=1$,
\[
        \partial_v^\eta \mc{P}_tf
        =(1-\zeta)\varrho_\infty^{-1/2}R_{0,\eta,t},
\]
with $R_{0,\eta,t}$ bounded in the Schwartz topology on compact positive time
intervals.  Hence
\[
        I_v(\mc{P}_tf)\le \zeta^{-1}
        \int \abs{\nabla_v\mc{P}_tf}^2\ud\mu<\infty.
\]
The same $\varrho_\infty^{-1/2}R$ representation for spatial and velocity
derivatives, after multiplication by the polynomial coefficients $v$ and
$\nabla U$, gives the weighted $L^2(\mu)$ bounds needed for the standard cutoff
passage.  Thus one may multiply \eqref{eq:forward} by $\log(\mc{P}_tf)$ and let
the cutoff radius tend to infinity in all integrations by parts.  The Hamiltonian
part contributes zero by skew-adjointness in $L^2(\mu)$, and the
Ornstein--Uhlenbeck part gives $-\gamma I_v(\mc{P}_tf)$, as required in
\eqref{eq:regular-entropy-diss}.

We now verify \ref{reg:flow} in Definition~\ref{def:regular}.  The vector field $m_t$ is smooth and
bounded in $x$ on every compact positive time interval; hence the classical
non-autonomous ODE has a unique global flow.  Since the density $q_t\mu_x$
solves the smooth continuity equation, the method of characteristics gives
$(X_s)_\#(q_t\mu_x)=q_{t+s}\mu_x$.

We now prove the second-order expansion.  Fix a compact interval
$I\subset(0,\infty)$ and restrict $t$ to a slightly smaller compact interval so
that $t+\sigma\in I$ for
$\abs{\sigma}\le s_0$.  Along a characteristic,
\[
        \frac{\ud}{\ud\sigma}m_{t+\sigma}(X_\sigma(x))
        =a_{t+\sigma}(X_\sigma(x)).
\]
Therefore, for $s>0$,
\begin{equation}\label{eq:flow-second-order-integral}
        X_s(x)-x-sm_t(x)
        =\int_0^s (s-\sigma)a_{t+\sigma}(X_\sigma(x))\ud\sigma.
\end{equation}
The same formula, with the obvious orientation change, holds for negative $s$.
It is enough to show
\begin{equation}\label{eq:advected-a-continuity}
        \norm{a_{t+\sigma}(X_\sigma)-a_t}_{L^2(q_t\mu_x)}\longrightarrow0
        \qquad\text{as }\sigma\to0,
\end{equation}
locally uniformly in $t$.  On every fixed ball this follows from smoothness and
from the uniform convergence $X_\sigma\to\Id$, which is a consequence of the
boundedness of $m_t$.  For the complement of a large ball, boundedness of $m_t$
implies $\abs{X_\sigma(x)}\ge\abs{x}-C\abs{\sigma}$; using the transport identity,
\[
        \int_{\{\abs{x}>R\}}\abs{a_{t+\sigma}(X_\sigma(x))}^2q_t\ud\mu_x
        \le \int_{\{\abs{y}>R-C\abs{\sigma}\}}\abs{a_{t+\sigma}(y)}^2q_{t+\sigma}
        \ud\mu_x,
\]
and the right-hand side is uniformly small for large $R$ by
\eqref{eq:coefficient-tail}.  The same tail estimate applies to $a_t$.  This
proves \eqref{eq:advected-a-continuity}.  Taking the $L^2(q_t\mu_x)$ norm in
\eqref{eq:flow-second-order-integral} and using Minkowski's inequality gives
\[
        \left\|X_s-x-sm_t-\frac{s^2}{2}a_t\right\|_{L^2(q_t\mu_x)}
        \le \int_0^s (s-\sigma)
        \norm{a_{t+\sigma}(X_\sigma)-a_t}_{L^2(q_t\mu_x)}\ud\sigma
        =o(s^2),
\]
locally uniformly in $t$.  Hence \eqref{eq:flow-expansion} holds.

For the cutoff terms containing the Brenier displacement, let
$A_R=\{x:\abs{x}\ge R\}$ and let
$\pi_t=(\Id,T_t)_\#(q_t\mu_x)$.  The boundedness of $q_t$ and the finite second
moment of $\mu_x$ give
\[
        \lim_{R\to\infty}\sup_{t\in I}
        \int_{A_R}(1+\abs{x}^2)q_t\ud\mu_x=0.
\]
Because the second marginal of $\pi_t$ is $\mu_x$, the measures
$\pi_t(A_R,\ud y)$ are dominated by $\mu_x(\ud y)$ and have total mass tending
to zero uniformly in $t\in I$.  Since $\mu_x$ has finite second moment,
\begin{equation}\label{eq:xi-tail-uniform}
        \lim_{R\to\infty}\sup_{t\in I}
        \int_{A_R}q_t\abs{\xi_t}^2\ud\mu_x=0.
\end{equation}
Indeed, use $\abs{x-T_t(x)}^2\le2\abs{x}^2+2\abs{T_t(x)}^2$ and split the
$T_t$-integral into $\{\abs{y}\le L\}$ and $\{\abs{y}>L\}$, then send first
$R\to\infty$ and then $L\to\infty$.

The coefficients multiplying $\xi_t$ in the cutoff errors are
$q_t$, $\nabla q_t$, $j_t$, $\Theta_t$, and $\nabla_x^*\Theta_t$.  We first
record the tail estimate that is needed for all of them:
\begin{equation}\label{eq:stress-coefficient-tail}
        \lim_{R\to\infty}\sup_{t\in I}
        \int_{A_R}\frac{\abs{B_t}^2}{q_t}\ud\mu_x=0,
        \qquad
        B_t\in\{q_t,\nabla q_t,j_t,\Theta_t,\nabla_x^*\Theta_t\}.
\end{equation}
For $q_t,j_t,\Theta_t$ this follows from boundedness, $q_t\ge\zeta$, and the
uniform vanishing of $\mu_x(A_R)$; for $\nabla q_t$ it is exactly
\eqref{eq:moment-tail-master}.  For $\nabla_x^*\Theta_t$, use
\[
        \frac{\abs{\nabla_x^*\Theta_t}^2}{q_t}
        \le C_\zeta\bigl(\abs{\nabla\Theta_t}^2+
        \abs{\nabla U}^2\abs{\Theta_t}^2\bigr).
\]
The $\nabla\Theta_t$ term has vanishing tails by \eqref{eq:moment-tail-master} and
the quotient rule applied to
$\Theta_t=M_t-j_t\otimes j_t/q_t-q_tI_d$.  The term with $\nabla U$ has vanishing
tails because $\Theta_t$ is uniformly bounded, $\nabla U$ has polynomial growth by
\eqref{eq:U-derivative-growth}, and \eqref{eq:U-confining-growth} gives
integrability of every polynomial against $\mu_x$.
Combining \eqref{eq:stress-coefficient-tail} with
\eqref{eq:xi-tail-uniform} gives, for each such coefficient $B_t$,
\[
        \int_{A_R}\abs{B_t}\abs{\xi_t}\ud\mu_x
        \le
        \left(\int_{A_R}q_t\abs{\xi_t}^2\ud\mu_x\right)^{1/2}
        \left(\int_{A_R}\frac{\abs{B_t}^2}{q_t}\ud\mu_x\right)^{1/2}
        \longrightarrow0
\]
uniformly for $t\in I$.

The convergence \eqref{eq:cutoff-A-def} follows by applying this estimate to
$B_t=\nabla q_t$.  The boundary convergence \eqref{eq:cutoff-boundary-def}
follows from $\abs{\nabla\chi_R}\le C/R$ and the same estimate with
$B_t=q_t$.  Finally, the product rule gives
\[
        \nabla_x^*(\chi_R\Theta_t)
        =\chi_R\nabla_x^*\Theta_t-(\nabla\chi_R)\cdot\Theta_t,
\]
where
$((\nabla\chi_R)\cdot\Theta_t)_i=\sum_k(\partial_{x_k}\chi_R)(\Theta_t)_{ik}$.
Therefore
\[
\begin{aligned}
        &\left|\int\xi_t\cdot\nabla_x^*(\chi_R\Theta_t)\ud\mu_x
        -\int\xi_t\cdot\nabla_x^*\Theta_t\ud\mu_x\right| \\
        &\qquad\le
        \int_{A_R}\abs{\xi_t}\abs{\nabla_x^*\Theta_t}\ud\mu_x
        +\frac{C}{R}\int_{A_R}\abs{\xi_t}\abs{\Theta_t}\ud\mu_x
        \longrightarrow0,
\end{aligned}
\]
uniformly for $t\in I$.  This proves \eqref{eq:cutoff-stress-def}; the same
coefficient-tail estimates eliminate the remaining $x$-boundary terms in the
moment and acceleration identities.
\end{proof}

The final ingredient is an entropy approximation with a positive floor.

\begin{lemma}[Entropy-dense smooth densities with a positive floor]\label{lem:entropy-density}
Let $g_0\ge0$, $\int g_0\ud\mu=1$, and $\Ent(g_0)<\infty$.  There exist numbers
$\zeta_n\downarrow0$ and bounded compactly supported smooth probability
density ratios $h_n$ such that
\begin{equation}\label{eq:positive-floor-approx}
        f_n:=\zeta_n+(1-\zeta_n)h_n
        \longrightarrow g_0\quad\text{in }L^1(\mu),
        \qquad
        \Ent(f_n)\longrightarrow \Ent(g_0).
\end{equation}
\end{lemma}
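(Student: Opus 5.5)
The construction proceeds in three stages, and at each stage we check that $L^1(\mu)$ convergence and entropy convergence are preserved.

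\emph{Stage 1: truncation.} Set $g^{(k)}=c_k^{-1}\min\{g_0,k\}\mathbf 1_{B_k}$, where $B_k$ is the ball of radius $k$ in $\RR^{2d}$ and $c_k=\int\min\{g_0,k\}\mathbf 1_{B_k}\ud\mu\to1$ by monotone convergence. Then $g^{(k)}\to g_0$ in $L^1(\mu)$. Since $\abs{\min\{g_0,k\}\mathbf 1_{B_k}\log(\min\{g_0,k\}\mathbf 1_{B_k})}\le \abs{g_0\log g_0}+\e^{-1}\mathbf 1_{\{g_0<1\}}$, and $g_0\log g_0\in L^1(\mu)$ because $\Ent(g_0)<\infty$ and the negative part is bounded, dominated convergence gives $\int \min\{g_0,k\}\mathbf 1_{B_k}\log(\cdot)\ud\mu\to\Ent(g_0)$. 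The normalization changes the entropy only by terms involving $\log c_k\to0$. So $\Ent(g^{(k)})\to\Ent(g_0)$.

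\emph{Stage 2: mollification.} Fix $k$ and let $\tilde g=g^{(k)}$, which is bounded by $k/c_k$ and supported in $B_k$. Mollify the Lebesgue density $\tilde g\varrho_\infty$ by a standard mollifier $\eta_\delta$, divide by $\varrho_\infty$, and renormalize. This gives $h_{k,\delta}$, which is smooth, has support in $B_{k+1}$, and satisfies $\norm{h_{k,\delta}}_\infty\le C_k$ because $\varrho_\infty$ is bounded above and below on $B_{k+1}$. As $\delta\to0$ we have $h_{k,\delta}\to\tilde g$ in $L^1(\mu)$, hence a.e. along a subsequence. On the compact set $B_{k+1}$ the function $s\log s$ is bounded for $s\in[0,C_k]$, so bounded convergence gives $\Ent(h_{k,\delta})\to\Ent(\tilde g)$.

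\emph{Stage 3: positive floor.} For $f=\zeta+(1-\zeta)h$, we have $\norm{f-h}_{L^1(\mu)}\le2\zeta$. For entropy, convexity gives $\Ent(f)\le(1-\zeta)\Ent(h)$, since $\Ent(1)=0$. Conversely, $\Phi(s)=s\log s$ is continuous and $0\le f\le 1+\norm{h}_\infty$ on $\supp h$, while $f=\zeta$ off $\supp h$. Therefore
\[
\Ent(f)=\int_{\supp h}\Phi(f)\ud\mu+\zeta\log\zeta\,\mu(\{h=0\}).
\]
As $\zeta\to0$ with $h$ fixed, the first term converges to $\Ent(h)$ by bounded convergence and the second term tends to $0$.

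A diagonal choice finishes the argument. For each $n$, pick $k_n$ so that both the $L^1$ and entropy errors of stage 1 are below $1/n$. Then pick $\delta_n$ so that the stage 2 errors are below $1/n$. Finally pick $\zeta_n\le 1/n$, decreasing, so that the stage 3 errors are below $1/n$. This yields \eqref{eq:positive-floor-approx}.

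The main obstacle is confirming entropy convergence, not merely lower semicontinuity, in stage 2. It is handled by keeping everything bounded and compactly supported so that bounded convergence applies. No additional assumptions on $U$ beyond smoothness and positivity of $\varrho_\infty$ are needed.
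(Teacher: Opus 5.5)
Your proof is correct and follows essentially the same route as the paper. You truncate and renormalize, mollify the Lebesgue density $\tilde g\varrho_\infty$ and divide back by $\varrho_\infty$, then add the floor; the only difference is that you obtain entropy convergence in the last two steps by bounded convergence on a common compact set, where the paper uses Jensen's inequality for convolution, convexity, and lower semicontinuity. (A harmless slip in Stage 3: the last term should be $\zeta\log\zeta\,\mu(\RR^{2d}\setminus\supp h)$, or the integral should be taken over $\{h>0\}$; the discrepancy vanishes as $\zeta\to0$.)
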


\begin{proof}
Write $\varrho_\infty$ for the Lebesgue density of $\mu$.  We first build a
bounded compactly supported recovery sequence without the positive floor.  Let
$\psi(z)=z\log z$ and let $B_R\subset\RR^{2d}$ be the Euclidean ball.  Since
$\Ent(g_0)<\infty$, the probability measure $g_0\mu$ is tight and the positive part
$\psi(g_0)_+$ is integrable.  Choose $R\to\infty$ and $L\to\infty$ so that, for
\[
        u_{R,L}=\min\{g_0,L\}\mathbf 1_{B_R},
        \qquad c_{R,L}=\int u_{R,L}\ud\mu,
\]
one has $c_{R,L}\to1$ and
$\Ent(c_{R,L}^{-1}u_{R,L})\to \Ent(g_0)$.  This follows by first truncating the
positive and negative parts of $\psi(g_0)$ and then using
$c\,\psi(c^{-1}u)=u\log u-u\log c$.

Fix such a bounded compactly supported ratio $u$ with $\int u\ud\mu=1$.
The Lebesgue density $p=u \varrho_\infty$ is bounded, compactly supported, and has
unit integral with respect to $\ud x\ud v$.  Extend $p$ by zero and mollify with
a nonnegative compactly supported Friedrichs kernel:
$p_\delta=\eta_\delta*p$.  Then $p_\delta\in C_c^\infty$, $p_\delta\ge0$,
$\int p_\delta\ud x\ud v=1$, and $p_\delta\to p$ in $L^1(\ud x\ud v)$.  Set
$h_\delta=p_\delta/\varrho_\infty$.  For small $\delta$ all supports lie in one
compact set, where $\varrho_\infty$ is smooth and bounded above and below; hence
$h_\delta$ is a bounded smooth compactly supported density ratio and
$h_\delta\to u$ in $L^1(\mu)$.

The entropy also converges.  Since $z\mapsto z\log z$ is convex, Jensen's
inequality for convolution gives
\[
        \int p_\delta\log p_\delta\ud x\ud v
        \le \int p\log p\ud x\ud v.
\]
Lower semicontinuity of the same convex integral under $L^1$ convergence gives
the reverse liminf inequality, so
$\int p_\delta\log p_\delta\to\int p\log p$.  Moreover
$\log \varrho_\infty$ is bounded on the common compact support and therefore
\[
        \int p_\delta\log \varrho_\infty\ud x\ud v
        \longrightarrow \int p\log \varrho_\infty\ud x\ud v.
\]
Using
\[
        \int h_\delta\log h_\delta\ud\mu
        =\int p_\delta\log p_\delta\ud x\ud v
          -\int p_\delta\log \varrho_\infty\ud x\ud v,
\]
we get $\Ent(h_\delta)\to \Ent(u)$.  A diagonal choice gives bounded compactly
supported smooth probability density ratios $h_n$ such that
$h_n\to g_0$ in $L^1(\mu)$ and $\Ent(h_n)\to \Ent(g_0)$.

Choose $\zeta_n\downarrow0$ and define
$f_n=\zeta_n+(1-\zeta_n)h_n$.  Then $f_n$ is smooth, bounded from
below by $\zeta_n$, and has unit mass.  The $L^1$ convergence is immediate.
Convexity of relative entropy gives
\[
        \Ent(f_n)
        =\Ent\bigl((1-\zeta_n)h_n+\zeta_n\cdot1\bigr)
        \le (1-\zeta_n)\Ent(h_n).
\]
Together with lower semicontinuity of entropy under $L^1(\mu)$ convergence, this
proves $\Ent(f_n)\to \Ent(g_0)$.
\end{proof}

\begin{lemma}[Lower semicontinuity of the velocity Fisher information]\label{lem:fisher-lsc}
Let $(Y,\mathfrak m)$ be either $(\RR^{2d},\mu)$ or
$([0,T]\times\RR^{2d},\ud t\otimes\mu)$.  If $f_n\ge0$ and
$f_n\to f$ in $L^1(Y,\mathfrak m)$, then
\begin{equation}\label{eq:fisher-lsc}
        \int_Y \frac{\abs{\nabla_v f}^2}{f}\ud\mathfrak m
        \le \liminf_{n\to\infty}
        \int_Y \frac{\abs{\nabla_v f_n}^2}{f_n}\ud\mathfrak m.
\end{equation}
The integrand has the usual convex lower-semicontinuous extension: it is zero
at $(f,\nabla_v f)=(0,0)$ and $+\infty$ at $(0,p)$ for $p\ne0$.
\end{lemma}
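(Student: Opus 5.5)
The plan is to realize the velocity Fisher information as a supremum of linear functionals of $f$ that are continuous under $L^1$ convergence. Lower semicontinuity then follows because a supremum of continuous functionals is lower semicontinuous. The key pointwise fact is the convex-duality identity for the perspective function $\Phi(a,p)=\abs{p}^2/a$ (with the stated extension at $a=0$):
\[
        \Phi(a,p)=\sup\Bigl\{\alpha a+\beta\cdot p:\ (\alpha,\beta)\in\RR\times\RR^d,\ \alpha+\tfrac14\abs{\beta}^2\le0\Bigr\}.
\]
This is checked by maximizing in $\alpha=-\abs{\beta}^2/4$. For $a>0$ one obtains $\sup_\beta(\beta\cdot p-a\abs\beta^2/4)=\abs p^2/a$. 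For $a=0$ the supremum is $0$ if $p=0$ and $+\infty$ otherwise. For $a<0$ it is $+\infty$, which is irrelevant since $f\ge0$.

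Next I would pass from the pointwise identity to the integrated one. I restrict to test fields $\alpha\in C_c^\infty(Y)$ and $\beta\in C_c^\infty(Y;\RR^d)$ with $\alpha+\abs\beta^2/4\le0$ pointwise. For such a pair, integration by parts in $v$ against $\mu$ (or against $\ud t\otimes\mu$, which involves no $t$-derivatives) gives
\[
        \int_Y\bigl(\alpha f+\beta\cdot\nabla_v f\bigr)\ud\mathfrak m
        =\int_Y f\bigl(\alpha-\nabla_v^{*}\beta\bigr)\ud\mathfrak m,
        \qquad \nabla_v^*\beta=-\divop_v\beta+v\cdot\beta .
\]
Here $\nabla_v f$ is understood distributionally. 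The right-hand side is a bounded compactly supported function paired with $f$, so it is continuous under $L^1(\mathfrak m)$ convergence. For each admissible pair,
\[
        \int f(\alpha-\nabla_v^*\beta)\ud\mathfrak m=\lim_n\int f_n(\alpha-\nabla_v^*\beta)\ud\mathfrak m\le\liminf_n\int\frac{\abs{\nabla_v f_n}^2}{f_n}\ud\mathfrak m .
\]
The inequality holds by the pointwise bound, whenever the $f_n$ have finite Fisher information; otherwise the right-hand side is $+\infty$ and there is nothing to prove. Taking the supremum over admissible pairs then yields the claim, provided the dual representation holds at the integrated level.

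The main obstacle is exactly this step: showing that
\[
        \sup_{(\alpha,\beta)}\int f(\alpha-\nabla_v^*\beta)\ud\mathfrak m
        =\int\frac{\abs{\nabla_v f}^2}{f}\ud\mathfrak m,
\]
including the case where the right-hand side is infinite or $\nabla_v f$ is not a function. I would proceed in two steps.

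\emph{Step 1: if the supremum $S$ is finite, $\nabla_v f$ is a function.} Use the pairs $\beta=\lambda\phi$, $\alpha=-\lambda^2\abs\phi^2/4$ and optimize in $\lambda$. This gives $\abs{\langle\nabla_v f,\phi\rangle}^2\le S\int f\abs\phi^2$, so $\nabla_v f=wf$ for some $w\in L^2(f\mathfrak m)$ with $\int f\abs w^2\le S$.

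\emph{Step 2: the supremum recovers the full integral.} Conversely, approximate the pointwise optimizer $\beta=2w$ (where $\nabla_vf=wf$) by $C_c^\infty$ truncations, taking $\alpha=-\abs\beta^2/4$. Monotone convergence over an exhausting sequence of truncations then recovers $\int f\abs w^2=\int\abs{\nabla_vf}^2/f$. The density argument uses that $C_c^\infty$ is dense in $L^2(f\mathfrak m)$, since $f\mathfrak m$ is a finite Radon measure.

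With both steps in place, the supremum equals the Fisher information and lower semicontinuity follows.
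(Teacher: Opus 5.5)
Your proposal is correct and is essentially the paper's argument. With $(\alpha,\beta)=(-\abs{\psi}^2,\pm2\psi)$, your functionals are exactly the paper's $\int[2u\nabla_v^{*,\kappa}\psi-u\abs{\psi}^2]$. Your Step~1, the Riesz representation giving $\nabla_vf=wf$ with $\int f\abs{w}^2\le S$, supplies the ``standard relaxation'' that the paper only asserts. That bound already gives the needed inequality, so Step~2 is superfluous.

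There is one small sign slip. With $\nabla_v^*\beta=-\divop_v\beta+v\cdot\beta$, integration by parts gives $\int f(\alpha+\nabla_v^*\beta)\ud\mathfrak m$, not $\int f(\alpha-\nabla_v^*\beta)\ud\mathfrak m$. This is harmless, since your admissible class is invariant under $\beta\mapsto-\beta$.
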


\begin{proof}
It is enough to prove the assertion on the space-time cylinder; the
time-independent case is identical.  Let
$\nabla_{v}^{*,\kappa}\psi=-\divop_v\psi+v\cdot\psi$ be the adjoint of
$\nabla_v$ in $L^2(\kappa)$, acting only on the velocity variables.  For smooth
positive $u$ and every
$\psi\in C_c^\infty((0,T)\times\RR^{2d};\RR^d)$,
\[
        \int \frac{\abs{\nabla_v u}^2}{u}\ud t\ud\mu
        \ge \int \left[2u\nabla_v^{*,\kappa}\psi-u\abs\psi^2\right]
        \ud t\ud\mu,
\]
with equality after optimizing over $\psi=\nabla_vu/u$ and truncating.  By the
standard relaxation of this identity, the Fisher functional equals the dual
quantity
\begin{equation}\label{eq:fisher-dual}
        \mathcal I_v(u)=
        \sup_{\psi\in C_c^\infty}
        \int \left[2u\nabla_v^{*,\kappa}\psi-u\abs\psi^2\right]
        \ud t\ud\mu
\end{equation}
for every nonnegative $u\in L^1$; if the supremum is finite, the distributional
velocity derivative is absolutely continuous with density satisfying the usual
Fisher bound.  For each fixed test field $\psi$, the right-hand side of
\eqref{eq:fisher-dual} is continuous under $L^1$ convergence because
$\psi$, $\nabla_v^{*,\kappa}\psi$, and $\abs\psi^2$ are bounded and compactly
supported.  Taking the supremum over $\psi$ gives \eqref{eq:fisher-lsc}.
\end{proof}

\begin{corollary}[Integrated entropy dissipation]\label{cor:entropy-integrated}
For every probability density $g_0$ with $\Ent(g_0)<\infty$ and every $t\ge0$,
\eqref{eq:entropy-integrated} holds.
\end{corollary}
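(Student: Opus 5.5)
The plan is to approximate $g_0$ by the floored smooth data of Lemma~\ref{lem:entropy-density}, prove the identity for the regular solutions these produce, and pass to the limit.

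Let $f_n=\zeta_n+(1-\zeta_n)h_n$ be as in Lemma~\ref{lem:entropy-density}, so that $f_n\to g_0$ in $L^1(\mu)$ and $\Ent(f_n)\to\Ent(g_0)$. Fix $t>0$ and a small $\delta>0$.

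By Proposition~\ref{prop:semigroup-facts}, the shifted solution $s\mapsto\mc{P}_{s+\delta}f_n$ is regular on $[0,t]$. Hence \eqref{eq:regular-entropy-diss} holds with absolute continuity, and integrating gives
\[
        \Ent(\mc{P}_{t+\delta}f_n)+\gamma\int_\delta^{t+\delta}I_v(\mc{P}_sf_n)\ud s=\Ent(\mc{P}_\delta f_n)\le\Ent(f_n),
\]
where the last inequality is the entropy contraction \eqref{eq:entropy-contraction}.

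Next I remove $\delta$. For fixed $n$, let $\delta\downarrow0$. The right side is bounded by $\Ent(f_n)$ independently of $\delta$. On the left, monotone convergence applies to the time integral: $I_v\ge0$ and the intervals $[\delta,t]$ increase to $(0,t]$. For the first term I use $L^1(\mu)$ strong continuity of $\mc{P}$, which gives $\mc{P}_{t+\delta}f_n\to\mc{P}_tf_n$, together with lower semicontinuity of entropy. Handling the shifted window carefully (for instance, dropping the part of the integral over $[t,t+\delta]$ by positivity), this yields
\[
        \Ent(\mc{P}_tf_n)+\gamma\int_0^tI_v(\mc{P}_sf_n)\ud s\le\Ent(f_n).
\]
Alternatively, I could apply the regular identity directly on $[\delta,t]$ and then let $\delta\to0$.

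Then I let $n\to\infty$. Since $\mc{P}_t$ is an $L^1(\mu)$ contraction, being Markov with invariant $\mu$, we have $\mc{P}_sf_n\to\mc{P}_sg_0$ in $L^1(\mu)$ uniformly in $s\in[0,t]$. In particular this convergence holds in $L^1([0,t]\times\RR^{2d},\ud s\otimes\mu)$. Lower semicontinuity of entropy under $L^1$ convergence handles the first term on the left. The space-time case of Lemma~\ref{lem:fisher-lsc} gives
\[
        \int_0^tI_v(\mc{P}_sg_0)\ud s\le\liminf_n\int_0^tI_v(\mc{P}_sf_n)\ud s.
\]
Combining these with $\Ent(f_n)\to\Ent(g_0)$ gives \eqref{eq:entropy-integrated}. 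The case $t=0$ is trivial.

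The main obstacle is essentially bookkeeping rather than analysis. One must interpret $I_v(\mc{P}_sg_0)$ through the relaxed dual functional \eqref{eq:fisher-dual}, which Lemma~\ref{lem:fisher-lsc} covers. One must also make the $\delta\to0$ limit rigorous using only lower semicontinuity plus the contraction \eqref{eq:entropy-contraction}, since entropy is not continuous in $L^1$.
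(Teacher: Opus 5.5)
Your proposal is correct and follows the paper's proof. You use the same floored approximations from Lemma~\ref{lem:entropy-density}, integrate the regular identity for the shifted solutions $s\mapsto\mc{P}_{s+\delta}f_n$, apply the contraction $\Ent(\mc{P}_\delta f_n)\le\Ent(f_n)$, and then pass to the limits $\delta\downarrow0$ and $n\to\infty$ via lower semicontinuity of entropy and the space-time case of Lemma~\ref{lem:fisher-lsc}. The only difference is in the $\delta\downarrow0$ step, where you use the change of variables plus monotone convergence (or integrate directly on $[\delta,t]$, which also avoids entropy lower semicontinuity there) instead of the paper's appeal to Fisher lower semicontinuity through $\sup_{s\le t}\|\mc{P}_{s+\delta}f-\mc{P}_sf\|_{L^1(\mu)}\to0$; this is a harmless and slightly simpler variant.
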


\begin{proof}
We use the approximation of Lemma~\ref{lem:entropy-density}.  First let
$f=\zeta+(1-\zeta)h$ be one of the smooth positive-floor densities
appearing in Proposition~\ref{prop:semigroup-facts}.  For every $\delta>0$, the
curve $s\mapsto \mc{P}_{s+\delta}f$ is regular on $[0,t]$, so integrating
\eqref{eq:regular-entropy-diss} gives
\[
        \Ent(\mc{P}_{t+\delta}f)+\gamma\int_0^t I_v(\mc{P}_{s+\delta}f)\ud s
        =\Ent(\mc{P}_\delta f).
\]
As $\delta\downarrow0$, strong continuity and the semigroup property give
\[
        \sup_{0\le s\le t}\|\mc{P}_{s+\delta}f-\mc{P}_sf\|_{L^1(\mu)}
        \le \|\mc{P}_\delta f-f\|_{L^1(\mu)}\longrightarrow0,
\]
and also $\mc{P}_{t+\delta}f\to \mc{P}_tf$ in $L^1(\mu)$.  Entropy is lower
semicontinuous in $L^1(\mu)$, and the space-time Fisher functional is lower
semicontinuous by Lemma~\ref{lem:fisher-lsc}.  Since
$\Ent(\mc{P}_\delta f)\le \Ent(f)$ by \eqref{eq:entropy-contraction}, we obtain
\[
        \Ent(\mc{P}_tf)+\gamma\int_0^t I_v(\mc{P}_sf)\ud s\le \Ent(f).
\]
Now take $f=f_n$ from Lemma~\ref{lem:entropy-density}.  The $L^1$ contraction
gives $\mc{P}_sf_n\to \mc{P}_sg_0$ in $L^1([0,t]\times\mu)$ and
$\mc{P}_tf_n\to \mc{P}_tg_0$ in $L^1(\mu)$.  Applying entropy lower
semicontinuity and Lemma~\ref{lem:fisher-lsc}, and using
$\Ent(f_n)\to \Ent(g_0)$, proves
\eqref{eq:entropy-integrated}. 
\end{proof}

\begin{proof}[Proof of Theorem~\ref{thm:main}]
Let $g_0$ be an arbitrary probability density with $\Ent(g_0)<\infty$, and
choose $f_n$ as in Lemma~\ref{lem:entropy-density}.  Fix $n$ and $\delta>0$.
By Proposition~\ref{prop:semigroup-facts}, the shifted solution
$s\mapsto \mc{P}_{s+\delta}f_n$ is regular on compact intervals.  Applying
Proposition~\ref{prop:corrector} to this regular solution gives, for every
$t\ge0$,
\begin{equation}\label{eq:smooth-shift}
\begin{aligned}
        \Ent(\mc{P}_t\mc{P}_\delta f_n)
        &\le \frac{1+\theta}{1-\theta}
        \exp\{-\lambda_\Gamma\sqrt\rho\,t\}\Ent(\mc{P}_\delta f_n)        \\
        &\le \frac{1+\theta}{1-\theta}
        \exp\{-\lambda_\Gamma\sqrt\rho\,t\}\Ent(f_n),
\end{aligned}
\end{equation}
where the second line uses entropy contraction.  Since
$\mc{P}_t\mc{P}_\delta=\mc{P}_{t+\delta}$ and
$\mc{P}_sf_n\to \mc{P}_tf_n$ in $L^1(\mu)$ as $s\downarrow t$, letting
$\delta\downarrow0$ and using lower semicontinuity of entropy gives
\begin{equation}\label{eq:n-decay}
        \Ent(\mc{P}_tf_n)
        \le \frac{1+\theta}{1-\theta}
        \exp\{-\lambda_\Gamma\sqrt\rho\,t\}\Ent(f_n).
\end{equation}
Finally $\mc{P}_tf_n\to \mc{P}_tg_0$ in $L^1(\mu)$ by $L^1$-contraction and
strong continuity of the semigroup.  A second application of lower
semicontinuity and \eqref{eq:positive-floor-approx} gives
\[
        \Ent(\mc{P}_tg_0)
        \le \liminf_{n\to\infty}\Ent(\mc{P}_tf_n)
        \le \frac{1+\theta}{1-\theta}
        \exp\{-\lambda_\Gamma\sqrt\rho\,t\}\Ent(g_0),
\]
which proves \eqref{eq:main-decay}.
\end{proof}

\section*{Acknowledgments}
This work was partially supported by the National Science Foundation under grant DMS-2309378.

\bibliographystyle{amsrn}
\bibliography{hypocoercivity}

\end{document}